\documentclass[12pt,a4paper]{article}

\usepackage[T2A]{fontenc}
\usepackage[utf8]{inputenc}
\usepackage[russian, english]{babel}

\usepackage{amssymb,amsthm,amsmath,amsfonts,ccaption,array,epic,eepic}
\usepackage{setspace}
\usepackage{tikz}
\usetikzlibrary{arrows}
\usepackage{ifthen}
\usepackage{arrayjobx}

\ifx\LibLexLoaded\undefined\else\expandafter \fi
\let\LibLexLoaded\relax

\def\makeatletter{\catcode`\@=11\relax}

\begingroup \makeatletter
\gdef\atc@end{\catcode`\@\errmessage{Unmatched \string\restoreatcode}}
\gdef\atc@stack{\atc@end}
\gdef\atc@next#1\atc@end{\def\atc@stack{#1\atc@end}}
\gdef\saveatcode{%
 \expandafter\expandafter\expandafter\atc@next
 \expandafter\the\expandafter\catcode\expandafter`\expandafter\@
 \expandafter\space\expandafter\atc@next\atc@stack
}
\gdef\restoreatcode{\catcode`\@=\atc@stack}
\endgroup

%
%
%
%

\saveatcode\makeatletter


\def\tempcount{\lib@alloc0\count\countdef\insc@unt}
\def\tempdimen{\lib@alloc1\dimen\dimendef\insc@unt}
\def\tempskip{\lib@alloc2\skip\skipdef\insc@unt}
\def\tempmuskip{\lib@alloc3\muskip\muskipdef\@cclvi}
\def\tempbox{\lib@alloc4\box\chardef\insc@unt}
\def\temptoks{\lib@alloc5\toks\toksdef\@cclvi}
\def\tempread{\lib@alloc6\read\chardef\sixt@@n}
\def\tempwrite{\lib@alloc7\write\chardef\sixt@@n}

\def\lib@alloc#1#2#3#4#5{%
  \advance\count1#1by\@ne
  \ifnum\count1#1<#4\else\lib@alloc@err#2\fi
  #3#5=\count1#1\relax
}
\def\lib@alloc@err#1{\errmessage{No room for a new temporary #1}}


\let\lib@tmp\undefined
\let\lib@next\undefined

\def\lib@empty{} 

\let\lib@gtmp\undefined 


\newif\ifyes

\def\testfile#1{%
  \begingroup
  \tempread\lib@tmp
  \openin\lib@tmp=#1\relax
  \ifeof\lib@tmp
    \closein\lib@tmp \endgroup \yesfalse
  \else
    \closein\lib@tmp \endgroup \yestrue
  \fi
}

\restoreatcode






\saveatcode\makeatletter

\newtoks\evp@shadow 
\evp@shadow{\evp@hidden}
\def\evp@hidden{\evp@hidden}

\def\@atnextpar{}
\def\@ateverypar{}

\newif\if@evp 

%
%
\def\evp@setup{%
 \if@evp\else
  \@evptrue
  \evp@shadow\everypar
  \everypar{%
   \let\lib@tmp\@atnextpar \global\let\@atnextpar\lib@empty \lib@tmp
   \@ateverypar
   \the\evp@shadow
  }%
  \let\everypar\evp@shadow
  \aftergroup\evp@aftergroup
 \fi
}

\def\evp@aftergroup{%
 \ifx\@atnextpar\lib@empty
  %
  %
  \begingroup
   \edef\lib@tmp{\the\evp@shadow}%
   \ifx\lib@tmp\evp@hidden \else
    \global\everypar\evp@shadow
    \global\evp@shadow{\evp@hidden}%
   \fi
  \endgroup
 \else
  %
  %
  \evp@setup
 \fi
}

\def\atnextpar#1{%
 \evp@setup
 \begingroup
  \expandafter\toks@\expandafter{\@atnextpar #1}%
  \xdef\@atnextpar{\the\toks@}%
 \endgroup
}

\def\ateverypar#1{%
 \evp@setup
 \begingroup
  \expandafter\toks@\expandafter{\@ateverypar #1}%
  \xdef\lib@gtmp{\the\toks@}%
 \endgroup
 \let\@ateverypar\lib@gtmp
 \let\lib@gtmp\undefined
}

\restoreatcode


\saveatcode \makeatletter

\newbox\risbox
\def\setrisbox{\setbox\risbox}
\def\riswidth{\wd\risbox}

\def\rismove(#1,#2){%
 \setrisbox\vbox{%
  \dimen@=#2\relax
  \vskip\dimen@
  \hbox{\dimen@=#1\relax\hskip\dimen@\box\risbox\hskip-\dimen@}%
  \vskip-\dimen@
 }%
}

%
\let\global@ris@options\lib@empty

\def\resetrisoptions{\let\global@ris@options\lib@empty}

%
\def\addrisoption#1{%
  \expandafter\toks@\expandafter{\global@ris@options #1\relax}%
  \edef\global@ris@options{\the\toks@}%
}

\def\ris@options{} 

%
\def\atnextris#1{%
  \expandafter\toks@\expandafter{\ris@options #1\relax}%
  \edef\ris@options{\the\toks@}%
}

%
%
\newif\ifrisleft
\newdimen\risspace
\newtoks\risadjust
\newtoks\risadjustabove
\newtoks\risadjustbelow
\newdimen\risminheight
\newcount\risshowbox

\newdimen\ris@totalspace
\newdimen\ris@hsize

%
\def\rischeckpage{%
  \dimen@\ht\risbox
  \advance\dimen@\dp\risbox
  \ifdim\dimen@<\risminheight \dimen@\risminheight \fi
  \advance\dimen@\pagetotal
  \advance\dimen@\pagedepth
  \advance\dimen@ -\pageshrink
  \ifdim\dimen@>\pagegoal
    \ifrisleft \risleftfalse \else \rislefttrue \fi
  \fi
}

\def\ris{\begingroup
  %
  %
  \risleftfalse
  \risspace 1em
  \risadjust{}%
  \risadjustabove{}%
  \risadjustbelow{}%
  \risminheight\z@
  \risshowbox\z@
  %
  %
  \global@ris@options
  \ris@options
  \futurelet\lib@next\ris@checkopt
}

\def\ris@parseopt{%
  \futurelet\lib@next\ris@checkopt
}

\def\ris@checkopt{%
 \expandafter\if\space\noexpand\lib@next
   \expandafter\ris@checkopt@space
  \else
   \expandafter\ris@checkopt@
  \fi
}
\def\ris@checkopt@space{\afterassignment\ris@parseopt\let\lib@next= }

\def\ris@checkopt@{%
 \ifx\lib@next[\expandafter\ris@getopt\else\expandafter\ris@\fi
}

\def\ris@getopt[#1]{#1\ris@}

%
\def\ris@{%
  \ris@breakpar{\the\risadjustabove\relax\the\risadjust\relax}%
  \risadjust{}
  \ris@totalspace\riswidth
  \advance\ris@totalspace\risspace
  \ris@hsize\hsize
  \advance\ris@hsize-\ris@totalspace
  \ifrisleft
    \global\parshape 1 \ris@totalspace \ris@hsize
    \ateverypar{\global\parshape 1 \ris@totalspace \ris@hsize}%
  \else
    \global\parshape 1 \z@ \ris@hsize
    \ateverypar{\global\parshape 1 \z@ \ris@hsize}%
  \fi
  \ifhmode \ris@insert \else \atnextpar{\ris@insert}\fi
  \no@page@breaks
  \ignorespaces
}

%
\def\ris@breakpar#1{%
  \ifhmode{%
   \parfillskip\z@\parskip\z@\tolerance\@m\par #1\parindent\z@\leavevmode
  }\else #1\fi
}

%
\def\no@page@breaks{%
  \ifx\tex@par@npb\undefined
    \let\tex@par@npb\par
    \let\tex@penalty@npb\penalty
    \interlinepenalty\@M
    \predisplaypenalty\@M
    \postdisplaypenalty\@M
    \def\par{\tex@par@npb\tex@penalty@npb\@M\relax}%
    \def\penalty{\ifvmode\count@\else\tex@penalty@npb\fi}%
  \fi
}

%
\def\end@no@page@breaks{%
  \ifx\tex@par@npb\undefined \else
    \let\par\tex@par@npb
    \let\penalty\tex@penalty@npb
    \let\tex@par@npb\undefined
    \let\tex@penalty@npb\undefined
  \fi
}

%
\def\ris@insert{{%
 \ifx\ris@totalspace\undefined
  \errmessage{Side insertion with no text}
 \else
  \ifnum\risshowbox>0
    \setrisbox\vbox{%
      \vskip-.4pt\hrule 
      \hbox{\hskip-.4pt\vrule \box\risbox \vrule\hskip-.4pt}%
      \hrule\vskip-.4pt
    }%
  \fi
  \setbox\z@\lastbox
  \hbox to\z@{%
   \ifrisleft \kern-\ris@totalspace
   \else \kern\ris@hsize \kern\risspace \fi
   \vbox to\z@{\kern -1ex \box\risbox\vss}\hss
  }%
  \box\z@
 \fi
}}

%
\def\endris{\end@no@page@breaks\futurelet\lib@next\endris@}

\def\endris@{%
 \ifx\lib@next\par
   \par \the\risadjustbelow\relax\the\risadjust\relax
 \else
   \ris@breakpar{\the\risadjustbelow\relax\the\risadjust\relax}%
 \fi
 \endgroup
 \global\parshape\z@
 \let\ris@options\lib@empty 
}

%
%

\def\epsrisbox#1{\hbox{%
  \testfile{#1}\ifyes
    \def\lib@tmp{\epsfbox{#1}}%
  \else
    \message{^^JWarning: file #1 not found^^J}%
    \global\epsfxsize\z@
    \global\epsfysize\z@
    \let\lib@tmp\relax
  \fi
  \lib@tmp
}}

\restoreatcode

%
%

\addrisoption{%
  \risminheight 3\baselineskip
}

\addrisoption{%
  \pretolerance=100 \tolerance=2000
  \hyphenpenalty=10 \exhyphenpenalty=10
}

\usepackage[text={17.5cm,25cm}, includehead]{geometry}

\def\gplus#1{\mathop{+\!\!_{_#1}}}

\def\ZZ{{\mathbb Z}}

\theoremstyle{plain}
\newtheorem{theorem}{Theorem}         

\newtheorem{lemma}[theorem]{Lemma}
\newtheorem{corollary}{Corollary}[theorem]
\theoremstyle{definition}
\newtheorem*{primer}{Example}
\theoremstyle{remark}

\captiondelim{. }

\begin{document}
\pagestyle{myheadings}

\makeatletter
\renewcommand{\@oddhead}{\hfil For which graphs the sages can guess correctly  the color of at least one hat. 
\hfil\thepage}
\makeatother

\title{For which graphs the sages can guess \\ correctly the color of at least one hat}
\author{K.~Kokhas\thanks{St.Petersburg State University, St.Petersburg, Russia}, 
A.~Latyshev\thanks{ITMO University, St.Petersburg, Russia}}

\maketitle


\begin{abstract}
Several sages wearing colored hats occupy the vertices of a graph.
Each sage tries to guess the color of his own hat merely on the basis of observing the hats
of his neighbours without exchanging any information. Each hat
can have one of \emph{three} colors. A predetermined guessing strategy is winning
if it guarantees at least one correct individual guess for every assignment of
colors. We completely solve the problem of describing graphs  for which the sages win.
\end{abstract}

\subsection{Introduction}

We consider a variant of the well-known hat guessing game, which we call <<Hats>> for short.

In this variant, there are $n$ sages and a lot of hats of $k$ different colors.
The sages sit at the vertices of an undirected graph $G$. Two sages see each
other if they occupy adjacent vertices. The sages participate in the following
TEST. The adversary gives them hats in such way that everybody can see his
neighbour's hats, but can not see his own hat and does not know its color.
Any communication between sages is prohibited. The sages try to guess the colors
of their own hats simultaneously (for example, on command from the adversary). We say
that sages pass the test (successfully) or win if at least one of them
guessed the color correctly.

Before the test, the sages are informed about its rules, and then they
have a session at which they should determine a public deterministic strategy
(public in the sense that everybody, including the adversary, knows it). 
A strategy is said to be \emph{winning} if for all possible placements of hats, 
at least one of the sages guesses the color of his hat correctly. 
We say that the sages win if they have a
winning strategy, and lose otherwise.

The main question of this paper is for which graphs the sages win. 
We consider the game <<Hats>> for $k=3$ only.

The <<Hats>> game was mentioned for the first time in 1961 in M.\,Gardner's
book~\cite{Gardner}. In 1998, T.\,Ebert considered it in his PhD
thesis~\cite{Ebert}, and the game became very popular. After that, a lot of variants
of <<Hats>>, mainly probabilistic, have been discovered. For example, 36 variants
of the rules are listed in~\cite{Krzywkowski}.

Let us list some known results for the <<Hats>> game. 

\begin{lemma}
  \label{subgraph}
  Let $G'$ be a subgraph of a graph $G$. If the sages have a winning strategy on $G'$, 
	then they also win on $G$.
\end{lemma}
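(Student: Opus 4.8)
The plan is to take a winning strategy on $G'$ and transplant it to $G$ almost verbatim, exploiting the fact that a sage may always choose to ignore part of what he sees. Write $V'$ and $E'$ for the vertex and edge sets of $G'$; being a subgraph means $V'\subseteq V(G)$ and $E'\subseteq E(G)$, so in particular every $G'$-neighbour of a vertex $v\in V'$ is also a $G$-neighbour of $v$. Thus a sage sitting at $v\in V'$, when placed on $G$, still sees at least the hats he would have seen on $G'$ (and possibly more).

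Given a winning strategy $S'$ on $G'$, I would define a strategy $S$ on $G$ as follows: a sage at a vertex outside $V'$ guesses arbitrarily (say, colour $1$), and a sage at a vertex $v\in V'$ looks only at the hats of his neighbours in $G'$ and outputs exactly the guess prescribed by $S'$ for that observation — ignoring the hats of any neighbours in $G$ that are not neighbours in $G'$. This is well-defined precisely because $S'$, as a strategy on $G'$, depends only on the colours worn by the $G'$-neighbours of $v$.

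To verify that $S$ is winning, fix an arbitrary colour assignment $c$ on $V(G)$ and let $c'$ be its restriction to $V'$. Running $S$ on $(G,c)$, each sage in $V'$ makes exactly the guess that $S'$ makes on $(G',c')$, since the two inputs agree on the relevant neighbourhoods. Because $S'$ is winning on $G'$, some sage $v\in V'$ guesses $c'(v)=c(v)$ correctly, and the same sage guesses correctly under $S$ on $(G,c)$. As $c$ was arbitrary, $S$ wins on $G$.

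I do not expect any real obstacle; the only point requiring care is the definition of ``subgraph'', which permits \emph{both} deleting vertices and deleting edges, and correspondingly the argument uses two independent monotonicity observations — added sages can be rendered harmless, and additionally visible hats can be discarded. The rest is bookkeeping.
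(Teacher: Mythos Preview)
Your proof is correct and is exactly the argument the paper has in mind; the paper simply declares the lemma ``obvious'' without writing out the details. Your write-up is a faithful unpacking of that one-word proof, including the only subtlety worth noting (that subgraphs may lack both vertices and edges, so sages in $V'$ may need to ignore some visible hats).
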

It is obvious.

\begin{theorem}[\cite{BHMKL}]
  \label{tree}
  If G is a tree, then the sages lose.
\end{theorem}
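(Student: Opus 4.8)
The plan is to root the tree and run an induction from the leaves to the root, tracking for each subtree which colours its root can be \emph{forced} to take in a configuration where every sage inside that subtree guesses wrong. Fix an arbitrary root $r$ of the tree $T$; for a non‑root vertex $v$ write $T_v$ for the subtree hanging below $v$ and $\mathrm{par}(v)$ for its parent. Given an arbitrary strategy of the sages, define for each colour $p\in\{1,2,3\}$
\[
 N_v(p)=\{\,c : T_v \text{ admits a colouring with } v \text{ coloured } c,\ \mathrm{par}(v) \text{ coloured } p,\ \text{and every sage of } T_v \text{ wrong}\,\}.
\]
The engine of the proof is the invariant $\lvert N_v(p)\rvert\ge 2$ for every non‑root $v$ and every colour $p$.

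For the base case $v$ is a leaf: sage $v$ sees only $\mathrm{par}(v)$, so his guess is a fixed colour $g_v(p)$, and $N_v(p)=\{1,2,3\}\setminus\{g_v(p)\}$ has size $2$. For the inductive step let $v$ have children $u_1,\dots,u_d$ (all non‑root, so the invariant is available for them). Since $T$ is a tree, the subtrees $T_{u_1},\dots,T_{u_d}$ are vertex‑disjoint and every sage of $T_{u_i}$ looks only inside $T_{u_i}\cup\{v\}$, so once the colour $c$ of $v$ is fixed these subtrees may be coloured independently. Hence $c\in N_v(p)$ exactly when there are colours $c_i\in N_{u_i}(c)$ with $g_v(p,c_1,\dots,c_d)\ne c$, where $g_v$ is sage $v$'s guessing function. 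Suppose $\lvert N_v(p)\rvert\le 1$; then two colours, say $1$ and $2$, lie outside $N_v(p)$, which means $g_v(p,\cdot)$ is identically $1$ on $\prod_i N_{u_i}(1)$ and identically $2$ on $\prod_i N_{u_i}(2)$. But $N_{u_i}(1)$ and $N_{u_i}(2)$ are two subsets of a $3$‑element set, each of size $\ge 2$, so they meet; choosing $c_i^{*}\in N_{u_i}(1)\cap N_{u_i}(2)$ for all $i$ yields one tuple on which $g_v(p,\cdot)$ must equal both $1$ and $2$ — a contradiction. This establishes the invariant.

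The same one‑step argument at $r$ finishes the theorem. Sage $r$'s guess $g_r(c_{w_1},\dots,c_{w_d})$ depends on the colours of $r$'s children $w_1,\dots,w_d$; if for every colour $c$ the function $g_r$ were identically $c$ on $\prod_i N_{w_i}(c)$, then picking $c_i^{*}\in N_{w_i}(1)\cap N_{w_i}(2)$ would force $g_r$ to be both $1$ and $2$. So some colour $c$ for $r$, together with colours $c_{w_i}\in N_{w_i}(c)$ and the witnessing colourings of the $T_{w_i}$, produces an assignment for which every sage of $T$ guesses wrong; the strategy is therefore not winning. (The one‑vertex tree is trivial: the lone sage sees nothing and the adversary avoids his constant guess.) The only delicate point — and the reason the argument is special to three colours (in fact it works for every $k\ge 3$, with ``$\ge 2$'' replaced by ``$\ge k-1$'') — is choosing the right quantity to induct on: the bare set of forceable root colours is not stable under the recursion, whereas the parametrised family $N_v(p)$, combined with the pigeonhole fact that two $(k-1)$‑subsets of $[k]$ intersect when $k\ge 3$, makes everything go through. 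For $k=2$ no such invariant can exist, since the sages do win on, e.g., the path $P_3$, which is a tree.
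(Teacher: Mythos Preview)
Your proof is correct. The invariant $|N_v(p)|\ge 2$ is exactly the right bookkeeping device, the base case and inductive step are clean, and the pigeonhole step (two size-$2$ subsets of $\{1,2,3\}$ must meet) is precisely what makes the contradiction go through. The root case is handled properly, and the remark about general $k\ge 3$ is accurate.

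As for comparison with the paper: the paper does not actually prove this theorem. It is quoted from \cite{BHMKL} as a known result, so there is no ``paper's own proof'' to match against. Your argument is in fact the standard one from that reference (up to notation), and it has a pleasant bonus: the same induction, read at the root, also yields Theorem~\ref{thr:two-colors-in-tree} --- your invariant says that for the arbitrarily chosen root there are at least two colours admitting a losing placement, which is exactly that statement. So you have proved both cited theorems at once.
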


\begin{theorem}[\cite{BHMKL}]
  \label{thr:two-colors-in-tree}
  Let $G$ be a tree and $A$ be any vertex of $G$. Then there exist two colors
  $c_1$ and $c_2$ and two losing placements of hats such that in the first
  placement of hats sage $A$ gets a~hat of color $c_1$, and in the second
  placement of hats sage $A$ gets a~hat of color $c_2$.
\end{theorem}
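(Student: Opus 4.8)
The plan is to root the tree at $A$ and to run an induction over the rooted subtrees, keeping track, for each subtree, of which colours its root can wear in a placement that makes everybody in that subtree guess wrong — as a function of the colour worn by the subtree's parent.

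Fix an arbitrary public deterministic strategy. Root $G$ at $A$; then every non-root sage observes exactly its parent and its children, and $A$ observes exactly its children. For a vertex $v$ write $T_v$ for its subtree and, for a colour $x\in\{1,2,3\}$, let $f_v(x)\subseteq\{1,2,3\}$ be the set of colours $c$ such that some placement of hats on $T_v$ has $v$ wearing colour $c$, the parent of $v$ wearing colour $x$, and every sage of $T_v$ guessing incorrectly; for $v=A$ define $f_A\subseteq\{1,2,3\}$ the same way but with no parent. Since a colour $c$ is worn by $A$ in some losing placement of $G$ precisely when $c\in f_A$, the theorem is exactly the assertion $|f_A|\ge 2$.

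First I would record the recursion for $f_v$. If $v$ has children $B_1,\dots,B_d$, its guess is a function $g_v(x,y_1,\dots,y_d)$ of the parent colour $x$ and the children colours $y_i$; a placement on $T_v$ with $v=c$, parent $=x$, and everybody in $T_v$ wrong exists iff for each $i$ one can pick $y_i\in f_{B_i}(c)$, independently over $i$ (so that all of $T_{B_i}$ guesses wrong), with moreover $g_v(x,y_1,\dots,y_d)\ne c$ (so that $v$ is wrong too). Equivalently, $c\notin f_v(x)$ iff $g_v(x,\cdot)$ is constantly $c$ on the box $\prod_{i=1}^{d}f_{B_i}(c)$; for a leaf this says $f_v(x)=\{1,2,3\}\setminus\{g_v(x)\}$, which has size $2$. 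The claim to prove, by induction on $|T_v|$, is that $|f_v(x)|\ge 2$ for all $v$ and $x$ (and $|f_A|\ge 2$). Leaves give the base case. For the inductive step, suppose $c\ne c'$ were both outside $f_v(x)$; then $g_v(x,\cdot)\equiv c$ on $\prod_i f_{B_i}(c)$ and $\equiv c'$ on $\prod_i f_{B_i}(c')$. By the inductive hypothesis every $f_{B_i}(c)$ and every $f_{B_i}(c')$ has size $\ge 2$, and any two size-$\ge 2$ subsets of the $3$-element colour set intersect; hence the two boxes share a common point, at which $g_v(x,\cdot)$ would equal both $c$ and $c'$ — a contradiction. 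So at most one colour is missing from $f_v(x)$; the same argument at the root (with $x$ absent) yields $|f_A|\ge 2$, and any two colours of $f_A$ serve as $c_1,c_2$.

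The one thing to be careful about is the bookkeeping after rooting — that each sage's guess really is a function of precisely (parent colour, children colours), so that the recursion for $f_v$ holds verbatim and the subtree placements glue together consistently (they overlap only in the single colour assigned to $v$). The combinatorial core is a one-liner — two size-$\ge 2$ subsets of a $3$-set always meet, so the product boxes are never disjoint — and it is exactly here that the hypothesis $k=3$ enters: for $k\ge 4$ disjoint $2$-sets exist and the argument breaks. Note that this also reproves Theorem~\ref{tree}, since $|f_A|\ge 2>0$.
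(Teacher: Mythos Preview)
Your argument is correct. Note that the paper does not give its own proof of this theorem; it is quoted from~\cite{BHMKL}, so there is nothing to compare against in the present text. Your rooted-subtree induction with the invariant $|f_v(x)|\ge 2$ is in fact the standard argument (and is essentially the one in~\cite{BHMKL}): the recursion $c\notin f_v(x)\iff g_v(x,\cdot)\equiv c$ on $\prod_i f_{B_i}(c)$ is set up correctly, and the contradiction step---two size-$\ge 2$ subsets of a $3$-set must meet, hence the two product boxes share a point where $g_v(x,\cdot)$ would take two distinct values---is exactly right and isolates where $k=3$ is used. The only cosmetic mismatch is that the paper uses the colour set $\{0,1,2\}$ rather than $\{1,2,3\}$.
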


W.\,Szczechla~\cite{Szczechla} proved the following difficult theorem.

\begin{theorem}
  \label{thm:Szczechla-cycles}
  Let $G$ be a cycle on $n$ vertices. Then the sages win if and only if
  $n$ is divisible by $3$ or $n = 4$.
\end{theorem}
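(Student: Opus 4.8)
\medskip
\noindent\textbf{Sketch of a proof.}
I would first reformulate the game on $C_n$ as a question about closed walks in a finite digraph. Label the vertices $0,1,\dots ,n-1$ cyclically, write $x=(x_0,\dots ,x_{n-1})\in\{0,1,2\}^n$ for the hat assignment, and encode a strategy by functions $g_i\colon\{0,1,2\}^2\to\{0,1,2\}$, sage $i$ guessing $g_i(x_{i-1},x_{i+1})$; the sages lose on $x$ exactly when $g_i(x_{i-1},x_{i+1})\ne x_i$ for all $i$. Introduce the $9$ states $(a,b)\in\{0,1,2\}^2$ and the $0$-$1$ transfer matrices $M_i$ given by $(M_i)_{(a,b),(b',c)}=[\,b=b'\,]\,[\,g_i(a,c)\ne b\,]$; then the number of losing assignments equals $\Tr(M_1M_2\cdots M_n)$. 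Equivalently, let $D$ be the digraph on the $9$ states in which the edge $(a,b)\to(b,c)$ --- ``owned'' by sage $i$ --- is present iff $g_i(a,c)\ne b$: losing assignments are exactly the closed walks of length $n$ in $D$, and the sages win iff the $g_i$ can be chosen so that $D$ has no closed $n$-walk. Two facts about admissible $D$ are worth recording. First, for each sage $i$ and ordered pair $(a,c)$ exactly one of the three edges $(a,0)\to(0,c)$, $(a,1)\to(1,c)$, $(a,2)\to(2,c)$ is absent, so $D$ always has $18$ edges and is never acyclic; consequently the sages lose for at least some $n$. Second, since they must also defeat the constant assignment $x\equiv a$ (otherwise they lose for every $n$), for each colour $a$ some sage has $g_i(a,a)=a$, i.e. the all-$a$ loop at $(a,a)$ is broken.

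For the \emph{if} direction one exhibits strategies. For $n=3$ the classical one works: sage $i$ guesses so that $x_0+x_1+x_2\equiv i\pmod 3$, so whatever the sum is, some sage is right. For $n=4$, where $C_4=K_{2,2}$ (sages $0,2$ both see $(x_1,x_3)$, sages $1,3$ both see $(x_0,x_2)$), one writes down an explicit strategy and verifies on all $3^4$ assignments that some sage is always correct --- equivalently, exhibits an admissible $D$ with no closed $4$-walk --- a finite check. For $n=3m$ no proper subgraph can help, since the proper subgraphs of a cycle are forests and lose by Theorem~\ref{tree}, so a strategy on the whole $C_{3m}$ is required. I would look for one that is periodic of period $3$: then $\Tr(M_1\cdots M_n)=\Tr\big((M_1M_2M_3)^m\big)$, so it suffices to choose $g_1,g_2,g_3$ with $N:=M_1M_2M_3$ nilpotent, giving $\Tr(N^m)=0$ for all $m\ge 1$ and hence no losing assignment on any $C_{3m}$; producing such $g_1,g_2,g_3$ and checking that $N$ is nilpotent is a direct finite computation. (Failing a uniform period-$3$ solution, one would build, for each $m$, a strategy whose transfer digraph has no closed walk of the single length $3m$.)

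For the \emph{only if} direction --- the hard part --- it suffices to prove that whenever $3\nmid n$ and $n\ge 5$, \emph{every} admissible $D$ contains a closed walk of length $n$; the cases $n\le 2$ reduce to a single edge (a forest) and follow from Theorem~\ref{tree}, and $n=4$ is the exception already dealt with. The plan is to analyse the strongly connected components of $D$ and their periods. Since $D$ is never acyclic it has a strongly connected component carrying closed walks, and in a strongly connected digraph of period $p$ one obtains, through any fixed vertex, closed walks of every sufficiently large length divisible by $p$; concatenating with short cycles and with walks through other components only enlarges the set of attainable lengths. One then has to show that the admissibility constraints --- the prescribed out-neighbourhoods together with the broken constant loops, up to the $S_3$-symmetry permuting colours --- restrict the possible component-and-period patterns of $D$ so severely that closed walks of every length $n\ge 5$ with $3\nmid n$ are forced. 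I expect the heart of the argument, and its main obstacle, to be exactly this classification: enumerating, up to symmetry, the possible strongly-connected-component structures of an admissible digraph and, for each, exhibiting the required walks --- and, in doing so, pinning down precisely why $n=4$ (and no other value outside the multiples of $3$) admits an evading digraph. The delicate point is that a transfer digraph can legitimately miss one length --- that is exactly what the winning cases $3\mid n$ and $n=4$ exploit --- so one must argue that it cannot miss $n$ in all the remaining cases; carrying this out is presumably the source of the difficulty noted above.
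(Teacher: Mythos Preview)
The paper does not prove this theorem; it is quoted from Szczechla~\cite{Szczechla} and explicitly flagged as a ``difficult theorem''. Your transfer-matrix setup is exactly the paper's $9$-vertex model (Section~2.2), and the paper notes that the companion $3$-vertex model was Szczechla's main tool, so the framework you chose is the right one.

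That said, what you have written is a plan, not a proof, and the gaps sit precisely where the theorem is hard. On the \emph{if} side you assert that one can choose $g_1,g_2,g_3$ with $N=M_1M_2M_3$ nilpotent, but you do not exhibit such a triple; this must actually be done (and can be), and your fallback clause ``failing a uniform period-$3$ solution\dots'' is not a proof either. On the \emph{only if} side---the substantive content of Szczechla's result---you correctly isolate the target (every admissible configuration forces a closed walk of each length $n\ge 5$ with $3\nmid n$) but stop at ``analyse the strongly connected components and their periods'' and ``enumerating, up to symmetry, the possible structures''. That enumeration is exactly the long case analysis in~\cite{Szczechla}; nothing in your sketch shows how to organise it, why it terminates, or why $n=4$ is the sole exception.

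There is also a genuine slip in the formulation. Once you allow the $g_i$ to differ, there is no single digraph $D$: the correct object is the ordered product $M_1M_2\cdots M_n$ (equivalently, a layered digraph on $n$ copies of the $9$ states). Your statements ``$D$ always has $18$ edges'', ``$D$ is never acyclic'', and the whole component/period analysis implicitly assume a homogeneous strategy (all $g_i$ equal). The only-if direction must rule out \emph{heterogeneous} strategies as well, and reducing to the homogeneous case is not obviously without loss of generality; Szczechla's argument does treat the general case, and this is part of why the proof is long.
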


In the second section we give necessary definitions and describe 3-vertex and
9-vertex models which we use to describe strategies in matrix
language. The 9-vertex model is a~fundamental tool for the computational
approach to the problem. Many of our results were obtained by hard computer
calculations. We use the 9-vertex model for ``fast'', i.\,e.,  relatively simple,
checking strategies found by the computer. It is difficult to find a
strategy, but whenever it is found, we can prove that it is indeed winning strategy by simple
calculations with $9\times 9$ matrices. Unfortunately, only in several cases we
can replace these calculations with a ``logical'' proof.

In the third section we analyze variants of the game in which the sages get
some extra information (``a hint''). These variants of the game are of independent
interest. Moreover, hints can be used for constructing (difficult) strategies in
the general variant of the game.

In the forth section we prove the following main theorem that describes all
graphs where the sages win.

\setcounter{theorem}{19}

\begin{theorem}
  The sages lose on a connected graph $G$ if and only if $G$ is a tree or
  $G$ contains a~unique cycle $C_n$, where $n$ is not divisible by $3$, $n\geq
  5$.
\end{theorem}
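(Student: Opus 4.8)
The plan is to split the equivalence into its two implications and, within each, to distinguish three cases by the cycle rank $\beta(G) = |E(G)| - |V(G)| + 1$ of the connected graph $G$: $\beta(G) = 0$ (a tree), $\beta(G) = 1$ ($G$ is a cycle $C_n$ with a rooted forest attached at some of its vertices), and $\beta(G) \ge 2$. Call a cycle \emph{good} if its length is $3$, $4$, or divisible by $3$, and \emph{bad} otherwise; by Theorem~\ref{thm:Szczechla-cycles} the sages win on a good cycle and lose on a bad one.

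\emph{The direction ``the sages win''.} Assume $G$ is neither a tree nor unicyclic with a bad cycle. If $\beta(G) = 1$ and the unique cycle $C_n$ is good, then $C_n$ is a subgraph of $G$, so the sages win on $G$ by Theorem~\ref{thm:Szczechla-cycles} and Lemma~\ref{subgraph}; the same argument applies whenever $\beta(G) \ge 2$ and $G$ contains some good cycle. There remains the case $\beta(G) \ge 2$ with every cycle of $G$ bad. Here $G$ contains one of the following as a subgraph: a \emph{figure-eight} (two cycles sharing exactly one vertex), a \emph{theta graph} (two vertices joined by three internally disjoint paths), or a \emph{handcuff} (two vertex-disjoint cycles joined by a path), and all cycles of this subgraph are again bad. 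The handcuff case is where the hint games of Section~3 enter: the sages on one cycle relay a bounded hint along the connecting path to a vertex of the other cycle, and Section~3 shows precisely that a bad cycle supplied with such a hint becomes winnable. The theta and figure-eight cases are treated by the same hint mechanism, after which a finite list of exceptionally small remaining configurations is checked directly with the $9\times 9$-matrix model.

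\emph{The direction ``the sages lose''.} If $G$ is a tree this is Theorem~\ref{tree}. Otherwise $G$ is a bad cycle $C_n$ with pendant trees, and I would prove, by induction on the number of vertices lying off the cycle, the following statement, which both certifies the loss and supports the induction: \emph{for every vertex $v$ on the cycle there exist two colors $c_1, c_2$ and two losing placements of hats in which $v$ receives $c_1$, respectively $c_2$}. The base case $G = C_n$ is a refinement of Theorem~\ref{thm:Szczechla-cycles} for bad cycles, which I would establish first. For the inductive step one peels off a pendant tree $T$ attached at a vertex $v$ and fixes a losing placement on $G \setminus (T \setminus \{v\})$ in which $v$ gets one of its two free colors; this freezes everything $v$ sees outside $T$, so $v$'s guess becomes a function of the hats inside $T$, and by Theorem~\ref{thr:two-colors-in-tree} (the tree $T$ rooted at $v$ cannot force a correct guess on two of the three colors at $v$) one fills $T$ so that no sage of $T$ guesses correctly while $v$'s induced guess misses its true color. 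A further application of the same device re-establishes the two-color freedom at a prescribed cycle vertex of the enlarged graph.

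The principal obstacle lies in the ``$\beta(G) \ge 2$, all cycles bad'' subcase of the winning direction: one must show that every way two bad cycles can sit inside a connected graph reduces, via Lemma~\ref{subgraph} and the hint analysis, to one of the handcuff, theta, or figure-eight patterns, and then actually produce winning strategies for these — strategies that, as the introduction warns, are found by computer and only verified by hand through the $9$-vertex model. A secondary difficulty, in the losing direction, is that the pair $c_1, c_2$ in the inductive invariant may have to change when a pendant tree is attached, so one must prove the invariant stable under this operation; this is exactly where the freedom in Theorem~\ref{thr:two-colors-in-tree} over \emph{which} two colors occur is used. Combining the two directions gives the theorem.
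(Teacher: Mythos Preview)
Your winning-direction plan is essentially the paper's: it too reduces the case of at least two cycles to three subgraph patterns — two vertex-disjoint cycles (your handcuff), two cycles sharing a single vertex (your figure-eight), and two vertices joined by three internally disjoint paths (your theta) — handled respectively by Lemmas~\ref{lem:2cycles}, \ref{lem:2cyclesWithCommonPoint}, and~\ref{lem:3Paths}. The paper does not bother first peeling off the case of a good cycle; those three lemmas work for cycles of arbitrary length, so your preliminary reduction is harmless but unnecessary.

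The losing direction, however, has a genuine gap. Your inductive invariant --- that for every strategy and every cycle vertex $v$ there are \emph{two} colours that $v$ can receive in some disproving placement --- is false already in the base case. Lemma~\ref{lem:primer_na-cycle} exhibits, for every cycle $C_{k+1}$ (in particular for every bad cycle $C_n$, $n\ge 5$), an explicit strategy under which \emph{all} disproving placements assign the single colour~$2$ to the distinguished vertex~$A$; this is exactly what makes the hint $A-\{2\}$ work in Lemma~\ref{thr:hint3_cycle}. So the ``refinement of Theorem~\ref{thm:Szczechla-cycles}'' you propose to establish first cannot be established, and your induction never starts. The attempted inductive step has a related problem: once you peel off the pendant tree~$T$ at $v$, the sage $v$'s strategy on the smaller graph is not well defined (it depends on the hats in~$T$), so the inductive hypothesis does not apply directly.

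The paper avoids all of this. It proves that the hint $A^*$ is useless (Theorem~\ref{thm-HintAstar}) and deduces Corollary~\ref{sledst:add-leaf}: appending a leaf to a losing graph yields a losing graph. Since every unicyclic graph arises from its cycle by repeatedly appending leaves, the losing direction for bad-cycle unicyclic graphs follows at once from Theorem~\ref{thm:Szczechla-cycles}, with no two-colour invariant needed.
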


\setcounter{theorem}{4}

\subsection{3-vertex and 9-vertex models}

Throughout the paper, let $G$ is an undirected graph without loops and
multiple edges. The sages occupy all vertices of $G$, we will
identify often a vertex with the~sage sitting in it. Let $H$ be the set of
hats colors, $H=\{0, 1, 2\}$. Denote by $C_n$ a cycle with $n$ vertices and by
$P_n$ a path with $n$ vertices. If we need to indicate the starting and
ending vertices of a path, we write $P_n(AB)$ or $P(AB)$ (for the description of
strategies involve oriented paths; in this case, we assume that the vertices
of the path are ordered in the direction from $A$ to $B$).

By the rules of the <<Hats>> game, the guess of each sage depends only on the hat colors 
of his neighbors. Therefore, a strategy of the sage in a vertex $v$ is
a function $f_v\colon H^{\deg(v)} \rightarrow H$. It is convenient to describe
the strategy $f_v$ by a $\deg(v)$-dimensional matrix. We put matrices defining strategies into 
square brackets; matrix entries have indices from $0$
to $2$ in each coordinate. For example, the sage that occupies a leaf vertex can
use the strategy ``I say what I see'' which corresponds to the matrix
$\begin{bmatrix}0&1&2\end{bmatrix}$. If a sage $v$ has two neighbours Left and
Right, then one of his possible strategies is
$$
\begin{bmatrix}
  0&1&2\\  
  1&1&1\\
  1&1&1
\end{bmatrix},
$$
which means that if the color of the Left's hat is $0$ then the sage $v$
calls out the color of the Right's hat. In all other cases, the sage $v$ calls out color $1$.
Usually, the sages are given names or numbers; in this
case, the matrix entries of the corresponding matrices are ordered
lexicographically. The list of all functions $\{f_v, \ v\in V\}$
will be called the \emph{(collective) strategy of the sages}.

Assume that the sages have fixed a collective strategy. \emph{A hat placement} is a function
${C\colon V\to H}$, i.\,e., it is defined if for each vertex $v$
of the graph $G$ a color $C(v)$ is chosen. We also denote the color of a vertex $v$
by $c_v$ (when this causes no ambiguity).
A hat placement is said to be \emph{losing}, or \emph{disproving}, if no 
sage makes a correct guess for this placement using the chosen strategy, 
i.\,e.\ for every vertex $v$ 
$$
f_v(C(u_1),\dots, C(u_k))\ne C(v),
$$
where $u_1, \dots, u_k$ is the list of neighbours of $v$. If $P$ is a path
in graph $G$ (or a cycle), then we say that a~hat placement on $P$  a \emph{disproving
chain} if no sage  in the inner vertices of $P$ makes a correct guess (in the case of the
cycle, no sage makes a correct guess).

\subsubsection{3-vertex model}

For a graph $G = \langle V, E \rangle$, let $3 * G$ be a graph with vertex set $V \times H$
in which two vertices $(u, i)$ $(v, j)$ are joined by an edge if and only
if $uv\in E$. We call the graph $3*G$ the \emph{3-vertex model} for graph $G$.  

It is convenient ot use the graph $3*G$ when considering hat placements on $G$. 
For a hat placement  $c = (c_1, c_2, \dots, c_\ell)$ on the vertex set $v=\{v_1, \dots,
v_\ell\}$ of graph $G$, we define the \emph{lifting} of~$c$ to the graph $3 * G$. 
By definition, it is a subgraph $G_c$ of $3 * G$ with vertex set $(v_i,
c_i)$ isomorphic to the graph $G$: vertices $(v_i, c_i), (v_j, c_j)$ are
joined by an edge if and only if $v_iv_j$ is an edge of $G$, see
Fig.~\ref{fig:part_of_3cn}. The 3-vertex model was the main tool in the proof of
theorem~\ref{thm:Szczechla-cycles} in~\cite{Szczechla}.

\setrisbox\vbox{\hsize=52pt\noindent
\begin{picture}(54,24)
  \linethickness{.1pt}
  \multiput(0,-2)(10,0){6}{\line(0,1){24}}
  \multiput(-2,0)(0,10){3}{\line(1,0){54}}
  \Thicklines
  \put(0,10){\vector(1,0){10}}
  \put(20,10){\vector(1,1){10}}
  \put(40,10){\vector(1,-1){10}}
\end{picture}}

\smallskip
\ris[\rismove(0pt,-1mm)]
Now describe another method of visualizing disproving chains, which uses Motzkin paths.
A \emph{Motzkin path} is an (oriented) lattice path with steps depicted on the
figure on the right.
\endris

\setrisbox\vbox{\hsize=4.5cm
\begin{picture}(104,54)
\linethickness{.1pt}
\multiput(0,-2)(10,0){10}{\line(0,1){44}}
\multiput(-2,0)(0,10){5}{\line(1,0){94}}
\Thicklines
\path(10,10)(20,20)(30,20)(40,30)(60,10)(80,30)(90,30)
\put(0,-2){\vector(0,1){50}}
\put(-2,0){\vector(1,0){99}}
\footnotesize
\put(-8,-4){0}\put(-8,6){1}\put(-8,16){2}\put(-8,26){0}\put(-8,36){1}
\end{picture}}

Let the sages sitting it the vertices of a path $A_1A_2\dots A_n$  get hats $c_1$, $c_2$,
\dots, $c_n$ where ${c_i\in\{0, 1, 2\}}$ for every $i$. We convert this hat placement into a Motzkin path. 
The path starts at the point $(1, c_1)$ of the coordinate plane. 
Observe that in calculations modulo 3 any reminder can be ob- 
\ris
tained from any other one by adding $-1$, $0$ or $1$ in $\ZZ_3$. So, we
can define a path by applying the following rule: if the $i$th point of the
path has already been defined, then next line is $\rightarrow$ if $c_{i+1} = c_i$,
$\searrow$ if $c_{i+1}=c_i - 1 \bmod 3$, and $\nearrow$ if $c_{i+1} = c_i + 1
\bmod 3$. For example, the hat placement 122021200 (for $n=9$) corresponds to Motzkin
path shown in the figure on the right.
\endris

\subsubsection{9-vertex model}

Given a graph $G$, let $L(G)$ be the \emph{graph of edges} of $G$, i.\,e.\ 
every vertex of graph $L(G)$ corresponds to an edge of $G$, 
and two vertices of $L(G)$ are joined by an edge if and only if the 
corresponding edges in $G$ share a common vertex.

First, assume that $G$ is a path $P=P_n$ (or a cycle $C_n$). 
Consider the graph $G^{[9]}=L(3*G)$. Its vertices are pairs of neighbouring sages in
the graph $G$, with possible colors of hats indicated for each sage. We call 
$G^{[9]}$ the \emph{initial graph of 9-vertex model}. Every path $T$ in the graph $3 *
G$ uniquely determines a path $\widetilde{T}$ in the graph $G^{[9]}$, which will be called
\emph{the lifting} of $T$. So, a hat placement $c$ on the initial path $P$
gives rise to the lifting $P_c$ in the graph $3 * G$, which, in turn, gives rise to the lifting
$\widetilde{P_c}$ in the graph $G^{[9]}$. Thus, hat placements on the 
graphs $P$ and $C_n$ can be interpreted as paths in the graph $G^{[9]}$ (see
Fig.~\ref{fig:9vertex}).

\begin{figure}[t]
  \footnotesize
  \minipage{0.42\linewidth}
  \centering
  \begin{tikzpicture}   
      \begin{scope}[every node/.style={circle, draw, inner sep=1pt}]
        \foreach [count=\x] \name/\col in {A/0, B/2, C/1, D/2}    
        \foreach \y in {0}
           \node (\name\y) at (1.5 * \x, -1.5 * \y)[label=above: {\col}] {$\name$};  
      \end{scope}
      \foreach \l/\r in {A/B, B/C, C/D}
      \foreach \x in {0}
      \foreach \y in {0}
      \path (\l\x) edge  (\r\y);      
    \end{tikzpicture}
    \caption{A path $P$ and hat placement $c$ on it}\label{fig:chain_on_P}
\bigskip\bigskip\bigskip
    \begin{tikzpicture}   
      \footnotesize
      \begin{scope}[every node/.style={circle, draw, inner sep=1pt}]
        \foreach [count=\x] \name in {A, B, C, D}    
        \foreach \y in {0,1,2}    
        \node (\name\y) at (1.5 * \x, -1.5 * \y) {$\name_\y$};  
      \end{scope}
      \foreach \l/\r in {A/B, B/C, C/D}
      \foreach \x in {0, 1, 2}
      \foreach \y in {0, 1, 2}
      \path (\l\x) edge  (\r\y); 
      \path[ultra thick] (A0) edge  (B2);       
      \path[ultra thick] (B2) edge  (C1);       
      \path[ultra thick] (C1) edge  (D2);       
    \end{tikzpicture}
    \caption{The path $P_c$ is the lifting of  hat placement $c$ to graph $3*P$}\label{fig:part_of_3cn}
  \endminipage\hfill
  \minipage{0.57\linewidth}
  \centering
  \begin{tikzpicture}
    \footnotesize
    \begin{scope}[every node/.style={circle, draw, inner sep=1pt}]
      \foreach [count=\x] \n/\m in {A/B, B/C, C/D} {    
        \foreach \i in {0, 1, 2} {
          \foreach \j in {0, 1, 2} {
            \node (\n\i\m\j) at (4 * \x, - 3 * \i - \j) {$\n_\i\m_\j$};
          }
        }
      }            
    \end{scope}
    \foreach \na/\nb/\nc in {A/B/C, B/C/D}
    \foreach \a in {0, 1, 2}
    \foreach \b in {0, 1, 2}
    \foreach \c in {0, 1, 2}
    \path (\na\a\nb\b) edge (\nb\b\nc\c);
    \path[ultra thick] (A0B2) edge  (B2C1);       
    \path[ultra thick] (B2C1) edge  (C1D2);   
  \end{tikzpicture}
  
  $\underbrace{\qquad\quad}_{\text{Layer } AB}$\hskip 2.8cm
  $\underbrace{\qquad\quad}_{\text{Layer } BC}$\hskip 2.8cm
  $\underbrace{\qquad\quad}_{\text{Layer } CD}$
\caption{The path $\widetilde  P_c$ in the 9-vertex model $P^{[9]}$}
\label{fig:9vertex}
  \endminipage
\end{figure}

Now we define the graph $G^{[9]}$ in the general case. For an arbitrary graph $G$, 
let the \emph{initial graph of 9-vertex model} $G^{[9]}$  be the hypergraph of edges of the
graph $3*G$ defined as follows.

The vertices of the hypergraph $G^{[9]}$ correspond to the edges of graph $3 * G$. 
Thus, every edge in $G$ determines nine edges in $3 * G$ and nine vertices in $G^{[9]}$. 
The set of all vertices of $G^{[9]}$ that correspond to the same pair of
neighbouring sages i.\,e.\ to an edge of $G$ will be called a \emph{layer}.
For example, nine vertices $A_iB_j$ in the left column of Fig.~\ref{fig:9vertex}
form the layer corresponding to the pair of sages $A$ and $B$ from Fig.~\ref{fig:chain_on_P}.

Now we describe the hyperedges of $G^{[9]}$. Every vertex of the initial graph
$G$ determines exactly one hyperegde of graph $G^{[9]}$ in the following manner.
Let $u$ be an arbitrary vertex of $G$, let $v_1$, $v_2$,~\dots, $v_k$ be the
(unordered) set of all its neighbours and $(\alpha, \beta_1, \dots, \beta_k)$ be
an arbitrary set of colors. Then the pairs $(u,\alpha)$, $(v_1, \beta_1)$,
\dots, $(v_k, \beta_k)$ are vertices of the graph $3*G$, and the quadruples of the form
$\bigl((u,\alpha), (v_i, \beta_i)\bigr)$ are vertices of $G^{[9]}$. By
definition, the hyperedge determined by the vertex $u$ is the union of $(k+1)$ vertices
$$
\bigl((u,\alpha), (v_1, \beta_1)\bigr), \quad \bigl((u,\alpha), (v_2, \beta_2)\bigr), \quad\dots,
\bigl((u,\alpha), (v_k, \beta_k)\bigr)
$$
of hypergraph $G^{[9]}$.
For short, we will write this hyperedge as
\begin{equation}
  \label{eqn:multirebro}
  \Bigl((u,\alpha), \underbrace{(v_1, \beta_1),  \dots, (v_k, \beta_k)}_{\text{unordered list of pairs}}\Bigr).
\end{equation}

Now assume that the sages have chosen a strategy $f$. We remove some hyperedges from the  hypergraph
$G^{[9]}$. Namely, let a sage $u$ see a hat of color~$\beta_1$ on a sage $v_1$, 
a hat of color $\beta_2$ on a~sage $v_2$, \dots, a hat of color $\beta_k$ on sage $v_k$,
and let his guess according to the chosen strategy be a~color
$\alpha=f_u(\beta_1,\dots, \beta_k)$. Then we remove the hyperedge
$\Bigl((u,\alpha), (v_1, \beta_1),  \dots, (v_k, \beta_k)\Bigr)$ from~$G^{[9]}$. 
Perform this operation for each vertex $u$ and each set of colors
$(\beta_1,\dots,\beta_k)$. The resulting graph will be called the
\emph{9-vertex model of the graph $G$ for the strategy $f$} and denoted by $G_f^{[9]}$.

Given a disproving hat placement $c$ for a strategy $f$ on a graph $G$ (recall
that this means that a color $c_v$ is given for each vertex $v$ so that no sage
guesses the color of his own hat using the strategy $f$), we can define the~\emph{lifting} of the graph $G$ to
$G_f^{[9]}$ in the following way. First, we lift $G$ to $3 * G$, obtaining a
subgraph $G_c$ of the graph $3 * G$. Since we have taken a disproving hat
placement, for every vertex $v$ and all its neighbours none of the hyperedges of the 
form~\eqref{eqn:multirebro} in $G^{[9]}$ has been removed. Therefore we can lift
$G_c$ to~$G^{[9]}$: taking the edges of $G_c$ as vertices in $G^{[9]}$ and taking
all possible sets of vertices $G_c$ of the form~\eqref{eqn:multirebro} as
hyperedges yields a well-defined subhypergraph $\widetilde{G}$ of
$G_f^{[9]}$, which we call the \emph{lifting} of graph $G$ to $G_f^{[9]}$.

\begin{lemma}
  \label{lem:win9v}
  A strategy $f$ on graph $G$ is losing  if and only if there exists a hat
  placement that gives rise to a well-defined lifting of $G$ to $G_f^{[9]}$.
\end{lemma}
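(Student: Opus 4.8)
The plan is simply to unwind the definitions: both sides of the equivalence reduce to the existence of a hat placement under which every sage errs. Recall that $f$ is \emph{losing} exactly when it is not winning, i.e.\ when some placement $c\colon V\to H$ is disproving, and that the passage from $G^{[9]}$ to $G_f^{[9]}$ removes hyperedges but never removes a vertex.

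The one piece of bookkeeping to isolate is this. Fix a hat placement $c$, and for a vertex $u$ with neighbour set $\{v_1,\dots,v_k\}$ let $E_u(c)$ denote the hyperedge $\bigl((u,c_u),(v_1,c_{v_1}),\dots,(v_k,c_{v_k})\bigr)$ of the form~\eqref{eqn:multirebro}, i.e.\ the hyperedge determined by $u$ for the colours of $c$. Leaving aside single-edge components (see below), the only removal operation that can delete $E_u(c)$ is the one attached to sage $u$ seeing the colours $c_{v_1},\dots,c_{v_k}$; that operation deletes $\bigl((u,\alpha),(v_1,c_{v_1}),\dots,(v_k,c_{v_k})\bigr)$ with $\alpha=f_u(c_{v_1},\dots,c_{v_k})$, so $E_u(c)$ is deleted iff $\alpha=c_u$, i.e.\ iff sage $u$ guesses correctly. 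Hence every $E_u(c)$, $u\in V$, survives in $G_f^{[9]}$ if and only if no sage guesses correctly, i.e.\ iff $c$ is disproving.

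With this, the two directions are immediate. The lift $\widetilde G$ of $G$ to $G_f^{[9]}$ produced from a placement $c$ has as its vertices the edges of $G_c\subseteq 3*G$ — which are automatically vertices of $G_f^{[9]}$, no vertex ever being removed — and as its hyperedges exactly the $E_u(c)$, $u\in V$. So this lift is a well-defined subhypergraph of $G_f^{[9]}$ iff every $E_u(c)$ is present, iff $c$ is disproving. Since, on the other hand, $f$ is losing iff some placement is disproving, we conclude that $f$ is losing iff some placement gives rise to a well-defined lift of $G$ to $G_f^{[9]}$, which is the claim.

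I expect no genuine obstacle; the statement is essentially a dictionary entry translating the notion of a disproving placement into the language of the $9$-vertex model. The only points needing care are keeping the three layers of structure aligned — vertices of $G$ versus hyperedges of $\widetilde G$, edges of $G$ versus vertices of $\widetilde G$, and the layers of $G^{[9]}$ — and the degenerate case of a component equal to a single edge $\{u,w\}$: there $E_u(c)$ coincides with the hyperedge determined by $w$ and so is also deleted when $w$ guesses correctly, but it still survives precisely when neither of the two sages guesses correctly, so the equivalence is unaffected.
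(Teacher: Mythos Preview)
Your proof is correct and follows the same approach as the paper: both simply unwind the definitions to see that a placement $c$ yields a well-defined lifting in $G_f^{[9]}$ exactly when no hyperedge $E_u(c)$ has been removed, which is precisely the condition that $c$ is disproving. Your treatment is in fact more careful than the paper's, which states the converse direction in one sentence; your isolation of the key bookkeeping point (that $E_u(c)$ survives iff sage $u$ errs) and your remark on the single-edge component are welcome additions that the paper omits.
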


\begin{proof}
  We have already checked that every disproving hat placement can be lifted to $G_f^{[9]}$. 
	The inverse statement is also true: if the vertices of some
  subhypergraph $\bar{G}$ of $G_f^{[9]}$ project bijectively onto the edges of the graph $G$ 
	in such a way that every vertex of $G$  with all its outgoing edges corresponds to
  exactly one hyperedge in graph $G_f^{[9]}$, then subgraph $\bar{G}$ uniquely
  determines a~disproving hat placement the for strategy $f$ on the graph $G$. Moreover,
  in this case the graph $\bar{G}$ is the lifting of $G$ to $G_f^{[9]}$ constructed 
	from this hats placement.
\end{proof}

\medskip
E\,x\,a\,m\,p\,l\,e.  \,Consider the graph $G = P$ with the hat placement shown in
Fig.~\ref{fig:chain_on_P} (the vertices are ordered lexicographically). Let the
strategies of the sages $B$ and $C$ be chosen as follows:
\begin{equation}
  \label{eq:primer_strategii}
	f_B =
  \begin{bmatrix}
    1&0&0\\  
    1&2&1\\
    2&2&0
  \end{bmatrix},\qquad
  f_C =
  \begin{bmatrix}
    2&1&1\\
    0&0&0\\
    2&1&1
  \end{bmatrix}.
\end{equation}

Then the part of the graph $P_f^{[9]}$ corresponding to the fragment of the graph $L(3 *
P)$ in fig.~\ref{fig:part_of_3cn} is shown in fig.~\ref{fig:part_ofm9_f(cn)}.
Since the hat placement in Fig.~$\ref{fig:chain_on_P}$ is winning for the sages
(because the sage~$C$ guesses color of his hat correctly using the strategy $f$), this hat placement
does not determine the lifting of the initial graph $P$ to $P_f^{[9]}$, in particular,
we see that $P_f^{[9]}$ does not contain the edge $(B_2C_1)$--$(C_1D_2)$, which must
belong to such a lifting.

\begin{figure}[t]
  \minipage[b]{0.55\linewidth}
  \begin{tikzpicture}
    \footnotesize
    \begin{scope}[every node/.style={circle, draw, inner sep=1pt}]
      \foreach [count=\x] \n/\m in {A/B, B/C, C/D}
      \foreach \i in {0, 1, 2}
      \foreach \j in {0, 1, 2}
      \node (\n\i\m\j) at (4 * \x, - 3 * \i - \j) {$\n_\i\m_\j$};
    \end{scope}
    \def\fb{{
        {1,0,0},
        {1,2,1},
        {2,2,0}
      }}  
    \foreach \a in {0, 1, 2}
    \foreach \b in {0, 1, 2}
    \foreach \c in {0, 1, 2} {
      \pgfmathsetmacro{\nb}{\fb[\a][\c]}
      \ifthenelse{\lengthtest{\nb cm = \b cm} }{}{\path (A\a B\b) edge (B\b C\c);}
    }
    \def\fc{{
        {2,1,1},
        {0,0,0},
        {2,1,1}
      }}
    \foreach \a in {0, 1, 2}
    \foreach \b in {0, 1, 2}
    \foreach \c in {0, 1, 2}{
      \pgfmathsetmacro{\nb}{\fc[\a][\c]}
      \ifthenelse{\lengthtest{\nb pt = \b pt}}{}{
        \path (B\a C\b) edge (C\b D\c);
      }
    } 
  \end{tikzpicture}
  
  $\underbrace{\qquad\quad}_{\text{Layer } AB}$\hskip 2.6cm
  $\underbrace{\qquad\quad}_{\text{Layer } BC}$\hskip 2.6cm
  $\underbrace{\qquad\quad}_{\text{Layer } CD}$
  \caption{The graph $P_f^{[9]}$ for the startegy $f$ defined \goodbreak by formula \protect{\eqref{eq:primer_strategii}}}
  \label{fig:part_ofm9_f(cn)}
  \endminipage
  \hfil
  \minipage[b]{0.43\linewidth}
  \footnotesize
	\begin{align*}
    &&&{\arraycolsep=3pt\begin{matrix}
        \quad\!\!00&01&02&10&11&12&20&21&22
      \end{matrix}}
\\
    &{\renewcommand{\arraystretch}{1.2}\begin{matrix}
        00\\01\\02\\10\\11\\12\\20\\21\\22
      \end{matrix}}&  
      &\begin{pmatrix}
        1&0&0&0&0&0&0&0&0\\
        0&0&0&0&1&1&0&0&0\\
        0&0&0&0&0&0&1&1&1\\
        1&1&1&0&0&0&0&0&0\\
        0&0&0&0&1&0&0&0&0\\
        0&0&0&0&0&0&1&0&1\\
        1&1&0&0&0&0&0&0&0\\
        0&0&0&1&1&1&0&0&0\\
        0&0&0&0&0&0&0&0&1
      \end{pmatrix} 
  \end{align*} 
\caption{The matrix $M^{(B)}$ is the ajacency matrix for the layers $AB$ and $BC$ of
  the graph $P_f^{[9]}$ (the indices of rows and columns are obtained by erasing letters
  from the names of vertices in $P_f^{[9]}$)}
\label{fig:matricaMb} 
\endminipage
\end{figure}

\subsubsection{The adjacency tensor and counting the number of disproving hat placements}

Let $u$ be a vertex of graph $G$, and $v_1$, $v_2$, \dots, $v_k$ be the set of its
neighbours. Every edge $uv_i$ in~$G$ determines the layer $V_i$ consisting
of nine vertices of $G_f^{[9]}$ of the form $w^{(i)}_\gamma=(u,\alpha)(v_i,
\beta)$, where $\gamma=(\alpha, \beta)\in H\times H$ is a pair of colors. Let us
construct the \emph{adjacency tensor}, which describes in which case the point
$$
(w^{(1)}_{\gamma_1}, w^{(2)}_{\gamma_2}, \dots, w^{(k)}_{\gamma_k})\in
V_1\times V_2\times \dots\times V_k, \quad \text{ where} \  \gamma_i=(\alpha_i,
\beta_i)\in H\times H \ \text{for all $i$},
$$ 
is a hyperedge in $G_f^{[9]}$. By definition, let
$$
M^{(u)}_{\gamma_1 \gamma_2\dots \gamma_k }=
\begin{cases}
  1,& \vtop{\hsize = 11cm\sloppy\noindent if ${\alpha_1=\alpha_2=\dots =\alpha_k}$
    and  \hfill\hbox{\qquad\qquad} $\Bigl((u,\alpha_1), (v_1, \beta_1),  \dots,
    (v_k, \beta_k)\Bigr)$ is a hyperedge of $G_f^{[9]}$,}  \\
  0,& \text{otherwise} \\
\end{cases}
$$ 
For example, for $k=2$ this construction gives the ordinary adjacency matrix of the
bipartite subgraph formed by the two neighbouring layers of graph $G_f^{[9]}$. 
Figure~\ref{fig:matricaMb} demonstrates the matrix $M^{(B)}$ corresponding
to the vertex $B$ of the graph $P$ in Fig.~\ref{fig:chain_on_P} and the strategy $f$ defined
by~\eqref{eq:primer_strategii}. It describes the adjacency of the layers $AB$
and $BC$ of the graph $P_f^{[9]}$ shown in fig.~\ref{fig:part_ofm9_f(cn)}.

Observe an important technical detail. In the definition of the tensor
$M^{(u)}_{\gamma_1 \gamma_2\dots \gamma_k }$, the  coordinates  
$\gamma_i=(\alpha_i, \beta_i)$ of the parameters are listed in the order from the vertex $u$ to its
neighbours: $\alpha_i$ is a possible color of the vertex $u$, and $\beta_i$ is a
color of the  adjacent vertex $v_i$. But for matrices it is more convenient  to
parametrize the rows and columns with pairs of colors $\gamma=(\alpha,
\beta)\in H\times H$ as in fig.~\ref{fig:matricaMb}, i.\,e., with the coordinates of the first
parameter $\gamma_1=(\alpha_1, \beta_1)$ is written in the opposite order:
from a neighbouring vertex to the vertex $u$ (as in fig.~\ref{fig:matricaMb}, where
the order is from the vertex~$A$ to the vertex $B$). For an index $\gamma=(\alpha,
\beta)$ we write $\gamma^*=(\beta, \alpha)$.

If we have the adjacency tensors $M^{(u)}_{\gamma_1 \gamma_2\dots \gamma_k }$ and $M^{(v)}_{\gamma'_1
  \gamma'_2\dots \gamma'_\ell }$ for  two adjacent vertices $u$ and
$v$ of~$G$, then there are two indices  
$\gamma_j=(\alpha_u, \beta_v)$ and $\gamma'_j=(\beta_v, \alpha_u)=\gamma^*_i$,
that correspond to the edge~$uv$. We say that such indices are \emph{repeated}.

The main idea of using adjacency tensors can be illustrated as follows.
Consider the matrix $M^{(C)}$ describing the adjacency of the layers $BC$ and $CD$ 
in~fig.~\ref{fig:part_ofm9_f(cn)}. 
Contracting the product $M^{(B)}_{\gamma_1,\gamma_2}M^{(C)}_{\gamma^*_2,\gamma_3}$ 
on repeated index~$\gamma_2$, i.\,e.\ calculating the matrix product $M^{(B)}M^{(C)}$, we
obtain a~matrix whose elements count the number of paths in the graph
$G_f^{[9]}$ from the layer $AB$ to the layer $CD$. Every such path is, in fact, a
disproving chain for the graph $G=P$ shown in Fig.~\ref{fig:chain_on_P}, i.\,e.,
a~hat placement for which both sages do not guess the color of their hats correclty.

\smallskip
In the general case, contractions of adjacency tensors provide a
technique for counting the disproving hat placements. For instance, let us write the
products of the adjacency tensors over all vertices and all possible sets of indices:
$$
 M^{(u)}_{\gamma_1 \gamma_2\dots \gamma_k } M^{(v)}_{\gamma'_1 \gamma'_2\dots \gamma'_\ell }\dots 
$$
Assume that each bottom index is repeated twice. (By definition, 
repeated indices $\gamma$ are indices of vertices connected by edge that
describe the same hat arrangement on this edge.) Summing over all
repeated indices (every index runs over the set $H\times H$), we obtain the
\emph{full contraction} of the adjacency tensors.
$$
\sum M^{(u)}_{\gamma_1 \gamma_2\dots \gamma_k } M^{(v)}_{\gamma'_1 \gamma'_2\dots \gamma'_\ell }\dots
$$

\begin{theorem}
  \label{thm:tensor-convolution}
  Let $u$, $v$, \dots be the list of all vertices of a graph $G$. Let a
  strategy $f$ of sages be fixed, and assume that for every vertex $u$ the adjacency tensor
  $M^{(u)}_{\gamma_1 \gamma_2\dots \gamma_k }$ in hypergraph $G_f^{[9]}$ is
  given. Consider the full contraction of the adjacency tensors
  \begin{equation}
    \label{eqn:tensor-convolution}
    N=\sum M^{(u)}_{\gamma_1 \gamma_2\dots \gamma_k } M^{(v)}_{\gamma'_1 \gamma'_2\dots \gamma'_\ell }\dots \
  \end{equation}
  In this expression, every index is repeated twice, and the sum is over all 
	repeated indices (each repeated index runs over the set $H\times H$).
  Then $N$ equals the number of disproving hat placements for the strategy $f$.
\end{theorem}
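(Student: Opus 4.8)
The plan is to rewrite the full contraction \eqref{eqn:tensor-convolution} as a sum over ``edge-labelings'' of $G$ and to match the nonzero terms of that sum bijectively with disproving hat placements; equivalently one could first translate disproving placements into liftings of $G$ via Lemma~\ref{lem:win9v} and then match liftings with the nonzero terms, but I will argue directly.

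First I would make the range of summation explicit. For each edge $e=uv$ of $G$ there is exactly one repeated index attached to $e$, appearing in $M^{(u)}$ and in $M^{(v)}$; since these two occurrences are $\gamma$ and $\gamma^*$ for one $\gamma=(\alpha,\beta)\in H\times H$, a term of \eqref{eqn:tensor-convolution} is specified by a function $\varphi$ that assigns to every edge of $G$ an element of $H\times H$, read as $\varphi(uv)$ inside $M^{(u)}$ and as $\varphi(uv)^*$ inside $M^{(v)}$ (fixing, once and for all, which endpoint of each edge is the ``first'' one is a harmless bookkeeping choice). Thus $N=\sum_{\varphi}\prod_{u} M^{(u)}\bigl(\text{labels of the edges incident to }u\bigr)$, and because every factor is $0$ or $1$, the number $N$ equals the number of labelings $\varphi$ for which all factors are $1$.

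Next I would determine when all factors equal $1$. By the definition of $M^{(u)}$, the factor at a vertex $u$ is $1$ iff (i) all edges incident to $u$ record the same color in their ``$u$-coordinate'' --- call it $C(u)\in H$ --- and (ii) $f_u$, evaluated at the recorded colors of the neighbours of $u$, is $\ne C(u)$. The one point that needs care is the gluing: because the label of an edge $uv$ enters $M^{(v)}$ transposed, the ``$v$-coordinate'' of $\varphi(uv)$ as seen by $u$ is the same number that clause (i) at $v$ forces to equal $C(v)$; hence, as soon as (i) holds at every vertex, the labeling is necessarily $\varphi(uv)=(C(u),C(v))$ for the function $C\colon V\to H$ just defined, and clause (ii) at $u$ becomes $f_u(C(v_1),\dots,C(v_k))\ne C(u)$ for the neighbours $v_1,\dots,v_k$ of $u$. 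By definition this is exactly the statement that $C$ is a disproving hat placement for $f$.

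Finally I would assemble the bijection. Sending a labeling $\varphi$ with all factors equal to $1$ to the coloring $C$ constructed above, and conversely sending a disproving placement $C$ to the labeling $\varphi(uv)=(C(u),C(v))$, gives two mutually inverse maps between the set of labelings contributing $1$ to $N$ and the set of disproving hat placements; since each such labeling contributes exactly $1$, this yields $N=\#\{\text{disproving hat placements}\}$. I expect the main obstacle to be not conceptual but purely the index bookkeeping of the third paragraph --- keeping straight which coordinate of a shared edge is ``first'' at each of its two endpoints, and checking that the ``equal first coordinates'' clauses of the individual $M^{(u)}$ combine into one consistent global coloring --- which is precisely the content packaged by Lemma~\ref{lem:win9v} for anyone who prefers to quote it.
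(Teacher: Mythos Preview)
Your proof is correct and follows essentially the same approach as the paper: both arguments establish that $N$ counts the nonzero summands of~\eqref{eqn:tensor-convolution}, each equal to~$1$, and that these summands are in bijection with disproving hat placements. The paper dismisses the whole thing as a ``tautology'' in a few lines, whereas you spell out the index bookkeeping (the $\gamma/\gamma^*$ transposition and the consistency clause~(i)) that makes the bijection well defined; this extra care is justified but does not constitute a different route.
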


\begin{proof}
  In fact, this is a tautology: disproving hat placements correspond bijectively
  to the nonzero summands (and each nonzero summand in this sum is equal to~1).  
  Indeed, every disproving hat placement determines a unique assignment of colors to the
  vertices of~$G$. By the definitions of a hyperedge in $G_f^{[9]}$ and the adjacency tensor, 
	for every vertex $u$ we have $ M^{(u)}_{\gamma_1\gamma_2\dots \gamma_k }=1$. Therefore, 
	every disproving hat placement determines a summand in which all the factors are equal to 1.
  And vice versa, if we assign colors to each vertex $u$ and its neighbours in such
  a way that $ M^{(u)}_{\gamma_1 \gamma_2\dots\gamma_k }=1$,	then this color assignment 
	determines a well-defined hat placement. Obviously this hat placement is disproving.
\end{proof}

Let us give an example of calculations in the 9-vertex model.
\medskip
\begin{lemma}
  \label{lem:primer_na-cycle}
  Let $G$ be a cycle $AS_1S_2\dots S_k$.
  Let the sage $A$ use the strategy ${\mathbf A}$, and all the other sages 
  use the strategy ${\mathbf S}$, where 
  \begin{equation}
    \label{eqn:2strategiesOnCycle}
    {\mathbf A} =
    \begin{bmatrix}
      0&1&1\\
      0&1&0\\
      1&1&1
    \end{bmatrix},
    \qquad
    {\mathbf S} =
    \begin{bmatrix}
      2&1&1\\
      0&0&0\\
      2&1&1
    \end{bmatrix}.
  \end{equation}
  Then there exist exactly $k + 1$ disproving chains in $G$, 
	and all of them assign a hat of color $2$ to the sage $A$. 
\end{lemma}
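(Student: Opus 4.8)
The plan is to compute in the $9$-vertex model. By Lemma~\ref{lem:win9v} and Theorem~\ref{thm:tensor-convolution}, a disproving chain in $G = AS_1\dots S_k$ is precisely a cyclic sequence of colours $c_0 = c_A,\, c_1,\dots,c_k,\, c_{k+1} = c_A$ (writing $c_i = c_{S_i}$) with $\mathbf A[c_k][c_1] \ne c_A$ and $\mathbf S[c_{i-1}][c_{i+1}] \ne c_i$ for every $i$. Since every vertex has degree $2$, all the sages $S_i$ share one $9\times 9$ adjacency matrix $M_S$ and $A$ has a $9\times 9$ adjacency matrix $M_A$; contracting around the cycle, the number of disproving chains equals $\Tr(M_A M_S^{\,k})$, while the number of those assigning colour $\alpha$ to $A$ equals $\Tr(M_A^{(\alpha)} M_S^{\,k})$, where $M_A = M_A^{(0)} + M_A^{(1)} + M_A^{(2)}$ splits $M_A$ by the value $\alpha$ that $A$ records in both its row- and its column-index. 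So it suffices to prove $\Tr(M_A^{(0)} M_S^{\,k}) = \Tr(M_A^{(1)} M_S^{\,k}) = 0$ and $\Tr(M_A^{(2)} M_S^{\,k}) = k+1$.

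I would first write $M_S$ and $M_A$ out, reading them off the definitions exactly as the matrix $M^{(B)}$ is obtained just before Fig.~\ref{fig:matricaMb}: the $\bigl((c_{i-1},c_i),(c_i,c_{i+1})\bigr)$ entry of $M_S$ is $1$ iff $\mathbf S[c_{i-1}][c_{i+1}]\ne c_i$, and similarly for $M_A$ with $\mathbf A$. The decisive fact is that the matrix $\mathbf A$ never calls the colour $2$, so if $c_A=2$ then $A$ always errs; thus $M_A^{(2)}$ is the ``full'' block. For the identity $\Tr(M_A^{(2)} M_S^{\,k}) = k+1$ I would then argue combinatorially: with $c_0 = c_{k+1} = 2$ fixed, for a given pair $(c_{i-1},c_i)$ the condition ``$S_i$ errs'' is a restriction on $c_{i+1}$, so the count becomes the number of length-$k$ walks in an explicit $9$-state digraph whose states are the pairs $(c_{i-1},c_i)$, starting in $\{(2,\ast)\}$ and ending in $\{(\ast,2)\}$. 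Only a few states are reachable from the start and support arbitrarily long walks; they obey a simple first-order linear recursion whose solution yields exactly $k+1$ walks.

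For the two vanishing traces, with $\alpha \in \{0,1\}$, I would run the same transition analysis with boundary values $c_0 = c_{k+1} = \alpha$ and verify that every sequence satisfying all the $S_i$-constraints ends in a pair $(c_k,c_1)$ with $\mathbf A[c_k][c_1] = \alpha$ — so $A$ actually guesses its own colour and the placement is not disproving. This last step is where I expect the real difficulty: one cannot simply count, one must show that the boundary condition on $c_0,c_{k+1}$ is genuinely incompatible with $A$ erring as well, which needs the full reachability structure of the transition digraph rather than the size of a single stratum. (The reading-off of the two $9\times 9$ matrices is routine.)
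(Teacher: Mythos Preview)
Your plan is sound and uses the same framework as the paper: the $9$-vertex model, the adjacency matrices $\mathcal A,\mathcal S$ for the two strategies, and the identification of disproving cyclic placements with diagonal entries of $\mathcal A\mathcal S^{\,k}$. Your observation that $\mathbf A$ never outputs colour~$2$, so that the $c_A=2$ block of $\mathcal A$ is full, is correct and is implicitly what makes the paper's diagonal so clean.

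Where you diverge is in the execution of the computation. The paper does not split $\mathcal A$ by colour and does not argue via reachability in the transition digraph; instead it simply proves by induction an explicit closed form for every entry of $\mathcal S^{\,k}$ (valid for $k\ge 3$), multiplies once by $\mathcal A$, and reads off the nine diagonal entries of $\mathcal A\mathcal S^{\,k}$ directly: all vanish except the $(02)(02)$ entry, which is $1$, and the $(22)(22)$ entry, which is $k$. This handles both the count $k+1$ and the colour-$2$ claim in one stroke, with nothing left to a structural argument. Your route---walk-counting for $c_A=2$ and an incompatibility analysis for $c_A\in\{0,1\}$---would reach the same conclusion, but the step you flag as the ``real difficulty'' (showing that for $c_A\in\{0,1\}$ every $S$-consistent chain forces $\mathbf A[c_k][c_1]=c_A$) is exactly what the paper sidesteps by having the full matrix $\mathcal S^{\,k}$ in hand. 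In practice the explicit induction is shorter and leaves nothing to case analysis; your approach would be preferable only if one wanted to avoid writing out $9\times 9$ matrices and instead track a handful of reachable states.
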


\begin{proof}
  The strategies ${\mathbf A}$ и ${\mathbf S}$ are written for the cyclic order of
  sages, i.\,e., we assume that $S_k$ is the left neighbour of~$A$ and
  $S_1$ is the right neighbour of~$A$; while $A$ is the left neighbour of~$S_1$ and 
	$S_2$ is the right neighbour of~$S_1$,
  and so on, $S_{k-1}$ is the left neighbour of~$S_k$ and $A$ is the right
  neighbour of $S_{k}$. The strategies ${\mathbf A}$ и ${\mathbf S}$ can be
  described in the 9-vertex model by the following matrices
  \[
    {\mathcal A}=\begin{pmatrix}
      0&1&1&0&0&0&0&0&0\\
      0&0&0&1&0&0&0&0&0\\
      0&0&0&0&0&0&1&1&1\\
      0&1&0&0&0&0&0&0&0\\
      0&0&0&1&0&1&0&0&0\\
      0&0&0&0&0&0&1&1&1\\
      1&1&1&0&0&0&0&0&0\\
      0&0&0&0&0&0&0&0&0\\
      0&0&0&0&0&0&1&1&1
    \end{pmatrix},\qquad
    {\mathcal S}=
    \begin{pmatrix}
      1&1&1&0&0&0&0&0&0\\
      0&0&0&1&0&0&0&0&0\\
      0&0&0&0&0&0&0&1&1\\
      0&0&0&0&0&0&0&0&0\\
      0&0&0&1&1&1&0&0&0\\
      0&0&0&0&0&0&1&1&1\\
      1&1&1&0&0&0&0&0&0\\
      0&0&0&1&0&0&0&0&0\\
      0&0&0&0&0&0&0&1&1
    \end{pmatrix}.
  \]
  The elements of the matrix ${\mathcal S}^k$ satisfy the linear recurrent equation. 
	One can check by induction that for $k \geq 3$  
  \[
    {\mathcal S}^k=\begin{pmatrix}
      1  &  1&  1&             k-1&0&0&0&             k-1&             k-1\\
      0  &  0&  0&               0&0&0&0&               0&               0\\
      0  &  0&  0&               1&0&0&0&               1&               1\\
      0  &  0&  0&               0&0&0&0&               0&               0\\
      k-2&k-2&k-2&\frac{k^2-3k+4}2&1&1&1&\frac{k^2-3k+4}2&\frac{k^2-3k+4}2\\
      1&  1&  1&             k-1&0&0&0&             k-1&             k-1\\
      1&  1&  1&             k-1&0&0&0&             k-1&             k-1\\
      0&  0&  0&               0&0&0&0&               0&               0\\
      0&  0&  0&               1&0&0&0&               1&               1
    \end{pmatrix}.\]
  Then
  \begin{equation}
    \label{eqn:MatrixASk}
    {\mathcal A}{\mathcal S}^k \  = \
    \begin{aligned}
      &&&{\footnotesize\arraycolsep=3pt\begin{matrix}
      \quad\, 00&\quad 01&\ 02&\quad 10&\quad\ 11&\ 12&\ 20&\quad\ 21&\qquad 22
      \end{matrix}}
\\
     &{\footnotesize\renewcommand{\arraystretch}{1.25}\begin{matrix}
     00\\ 01\\02\\10\\11\\12\\20\\21\\22
     \end{matrix}}&  
     &\begin{pmatrix}
      \boxed{0}&0&0&  1&0&0&0&  1&  1\\
      0&\boxed{0}&0&  0&0&0&0&  0&  0\\
      1&1&\underline{1}&  k&0&0&0&  k&  k\\
      0&0&0&\boxed{0}&0&0&0&  0&  0\\
      1&1&1&k-1&\boxed{0}&0&0&k-1&k-1\\
      1&1&1&  k&0&0&0&  k&  k\\
      1&1&1&  k&0&0&\boxed{0}&  k&  k\\
      0&0&0&  0&0&0&0&  \boxed{0}&  0\\
      1&1&1&  k&0&0&0&  k&  k
    \end{pmatrix}
  \end{aligned} 
\end{equation}
(for clarity, we index the rows and columns of the matrix with the parameters $\gamma = (\alpha, \beta)\in
H\times H$).

The indexing of the rows in the product ${\mathcal A}{\mathcal S}^k$ ``inherited'' from the 
matrix $A$: an index $(\alpha, \beta)$ of a row is a possible collection of hat
colors on the edge going from $A$ to the left, i.\,e., on the edge $S_kA$
(where $\alpha$ is a color of $S_k$, and $\beta$ is a color of $A$).
The indexing of the columns in ${\mathcal A}{\mathcal S}^k$ is inherited from the
matrix $S$ corresponding to the last sage~$S_k$. Here, an index 
$(\alpha, \beta)$  of a column is a possible collection of hat colors on the edge going from $S_k$ to
the right, i.\,e., on the edge $S_kA$, too.
Thus, the diagonal elements of the matrix ${\mathcal A}{\mathcal S}^k$ give the
number of cyclic disproving chains on the graph $C_{k+1}$ for the given strategies.

For example, 1 in the position $\gamma_1=(0,2)$, $\gamma_2=(0,2)$ on the diagonal 
(it is underlined in~\eqref{eqn:MatrixASk}) means that there exists only one chain that
assigns color $2$ to sage $A$ and color $0$ to the neighbouring sage $S_k$ (such
a chain must begin with the edge $(S_k)_0A_2$ in the sense of the 3-vertex model and, 
making a full cycle, end with the same edge). The other nonzero element on the diagonal is
number $k$ in the position $(22)(22)$; it also corresponds to chains that assign
color~$2$ to the sage $A$.

\end{proof}

\subsubsection{SAT applying}

Theorem \ref{thm:tensor-convolution} allows us to reduce the question about the
existence of winning strategies on graphs to the Boolean satisfiability problem (SAT). Note that
formula~\eqref{eqn:tensor-convolution} involves nonnegative integers, and
the result is compared with zero: if $N=0$, then the sages win, otherwise they lose.

Now we replace the arithmetic calculations with logical ones. Assume that we try to
find the strategy~$f$ for the sages. For each vertex $u$ and all its neighbours $v_1$,
$\dots$, $v_k$ in the graph~$G$, for each color $\alpha$, and for each list of colors
$\beta = (\beta_1, \dots, \beta_k)$, we
create a Boolean variable $m^{(u)}_{\alpha, \beta}$. Let the value of this variable be FALSE if
and only if the sage in vertex $u$ guesses his color correctly for this arrangement of
colors, i.\,e., if $f(\beta)=\alpha$ (using $\alpha$ and $\beta$ instead of
$\gamma$ we cut of the manifestly zero part of the adjacency tensor). For the
values of these variables to correspond to a real strategy, we require that for every
list of colors $\beta$ exactly one of the variables $m^{(u)}_{0, \beta}$,
$m^{(u)}_{1, \beta}$, $m^{(u)}_{2, \beta}$ takes the value FALSE.
For this, we require that the expression
\begin{equation}
  \label{eqn:log-strategy-restriction}
  \bigwedge\limits_{u, \beta} \Bigl(
    \bigl(
    \neg m^{(u)}_{0, \beta} \vee \neg m^{(u)}_{1, \beta} \vee \neg m^{(u)}_{2, \beta}
    \bigr) \wedge
    \bigl(
    m^{(u)}_{0, \beta} \vee m^{(u)}_{1, \beta}
    \bigr) \wedge
    \bigl(
    m^{(u)}_{0, \beta} \vee m^{(u)}_{2, \beta}
    \bigr) \wedge
    \bigl(
    m^{(u)}_{1, \beta} \vee m^{(u)}_{2, \beta}
    \bigr)
  \Bigr)
\end{equation}
is always true.
The logical analogue of formula~\eqref{eqn:tensor-convolution} 
\begin{equation}
  \label{eqn:log-win-strategy}
  \bigvee \Bigl( m^{(u)}_{\alpha, \beta_1, \dots, \beta_k }\wedge m^{(v)}_{\alpha', \beta'_1, \dots, \beta'_\ell}\wedge \dots\Bigr)
\end{equation}
has the value TRUE if for at least one hat placement at least one factor
$ m^{(u)}_{\alpha, \beta_1, \dots, \beta_k }$
in each conjunction is FALSE. In this case, there are no
disproving hat placements, and the strategy is winning.

Thus, the question about the existence of a winning strategy is reduced to the question
of whether one can simultaneously set the values of all the variables 
in such a way that both~\eqref{eqn:log-strategy-restriction} 
and the negation of~\eqref{eqn:log-win-strategy} are
TRUE. So, the original question  is reduced to the Boolean
satisfiability problem (SAT). It is in this way we have found all concrete strategies
presented in this paper. In calculations we use the  SAT solver Lingeling
(\verb"http://fmv.jku.at/lingeling/~") developed by a~team  headed by A.\,Biere (Institute for
Formal Models and Verification, Austria).

This approach is very efficient compared with the naive brute-force search. For example,
a~strategy on the cycle $C_n$ is determined for each vertex by a $3\times 3$ matrix as
in~\eqref{eqn:2strategiesOnCycle}. There exist $3^9$ variants for one matrix, and,
therefore, the brute-force approach in this case requires considering
$3^{9n}$ variants. For the SAT approach, we define $3^3=27$ variables
$m^{(u)}_{\alpha, \beta}$ for each vertex~$u$ and obtain a formula with $3^{3n}$
variables.

\subsection{The theory of hints}

\subsubsection{Hints}

In this section, we study different ``hints'', i.\,e., public ways to help the sages.
\emph{A hint} is a rule that restricts the range of possible configurations hat colors  or
changes the procedure of the test. The hint is reported to the sages before the session. It
means that everybody (the sages and the adversary) knows about the hint during the session.

Consider the following hints.
\begin{center}
\begin{tabular}{|p{2.5cm}|p{10cm}|}
      \hline
      Hint notation& The information given (to everybody) by the hint   \\ \hline
      $2A$     &The sage $A$ can make two guesses about his hat color\\ \hline
      $A-1$    &The sage $A$ gets a hat of one of two colors, and these colors are explicitly known\\ \hline
      $A=B$    &The sages $A$ and $B$ get hats of the same color\\ \hline
      $A\ne B$ &The sages $A$ and $B$ get hats of the different colors\\ \hline
      $A^*$    &The sage $A$ get hat of one of only two colors; 
			          these colors will be known (to him only) in the moment when he puts on the hat \\ \hline
\end{tabular}
\end{center}

For $i\in\{0,1,2\}$ we denote the hint $A-1$ by $A-\{i\}$ if the sage $A$ will not get a hat of color $i$.

Additionally, we agree that the adversary is always plays against the sages, 
i.\,e., if he can choose a hat placement or give a hint  
complying with rules in such a way that the sages lose, he does this.  

In the case of hint $A^*$, during the session the sages fix strategies for everybody except~$A$. 
For~$A$, they determine three strategies which he will use depending on the missing color. 

\subsubsection{Examples of concrete strategies in the ``Hats'' game with hints}

The fact that the sages can win with the hints $A\ne B$, $A=B$ or $A - 1$ 
is established in the following lemmas (for some concrete graphs). 
The strategies in this lemmas were found by computer search. 
Although finding strategies is a difficult problem, 
checking that these strategies win is almost trivial, and in some cases we can even 
suggest a ``logical proof''.
\begin{lemma}
    \label{lem:path_with_a=b}
    On the graph $P_n(AB)$ with $n\geq 2$, the sages win with the hint $A\ne B$ or $A=B$.
\end{lemma}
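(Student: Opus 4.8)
The plan is to exhibit explicit strategies for the sages on $P_n(AB)$ under each of the two hints, and then verify correctness by a short logical argument (or, if one prefers, by the matrix calculation of the 9-vertex model). First I would treat the hint $A = B$. Here $A$ and $B$ are the two endpoints of the path $A = A_1, A_2, \dots, A_n = B$. Since the hint guarantees $c_A = c_B$, sage $A$ can base his guess on what he sees: he sees only $A_2$, so he cannot see $B$ directly (unless $n = 2$, in which case $A$ sees $B$ and trivially guesses $c_B = c_A$ correctly). For $n \geq 3$ the idea is to propagate information along the path: let the interior sages use a "relay" strategy that, under the assumption that no interior sage has yet guessed correctly, forces the hat colors along the path to follow a prescribed Motzkin-type pattern, so that by the time the pattern reaches $B$, sage $B$'s view of $A_{n-1}$ determines $c_B$, hence $c_A$; meanwhile sage $A$ uses the equality $c_A = c_B$ together with his view of $A_2$ in a complementary way. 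Concretely, one picks the interior strategies so that in any disproving chain the sequence $c_{A_1}, c_{A_2}, \dots, c_{A_n}$ is constant (or more generally determined), and then the constraint $c_A = c_B$ is violated unless $A$ or $B$ guesses right.

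For the hint $A \neq B$ the structure is the same: the interior sages run a relay so that, in a putative disproving chain, $c_B$ is forced to equal some fixed function of $c_A$ and the interior colors, and then sage $A$ (who knows $c_A \neq c_B$) or sage $B$ can eliminate the remaining possibility. In both cases the cleanest route is to write down the $3\times 3$ matrices for all sages — the endpoints $A$, $B$ and the common interior strategy — and apply Theorem~\ref{thm:tensor-convolution}: form the product $M^{(A)} M^{(A_2)} \cdots M^{(A_{n-1})} M^{(B)}$ of $9\times 9$ (for interior vertices) and smaller boundary tensors, restrict to the entries compatible with the hint (for $A = B$, keep only placements with $c_A = c_B$; for $A \neq B$, only those with $c_A \neq c_B$), and check the relevant contraction is $0$. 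The matrices are chosen so that the interior matrix has a simple power (a recurrence, as in the proof of Lemma~\ref{lem:primer_na-cycle}), which makes the induction on $n$ transparent.

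The main obstacle I anticipate is not the verification but the \emph{uniformity in $n$}: one needs a single interior strategy that works for every $n \geq 2$ (or $n \geq 3$, handling $n = 2$ separately), and one must check that the induction on the path length genuinely closes — i.e.\ that the relay does not "forget" the information it is supposed to carry. This is exactly the kind of statement that the 9-vertex model is designed to make routine: once the interior matrix $M$ is fixed, computing $M^{n-2}$ (or recognizing its pattern) reduces the whole family of graphs $P_n(AB)$ to a single linear-algebra fact, and the hint constraint is imposed by zeroing out the appropriate entries before taking the trace/contraction. I would therefore expect the proof to consist of: (i) stating the endpoint and interior matrices; (ii) computing the power of the interior matrix by induction; (iii) forming the constrained contraction and observing it vanishes; and (iv) separately checking the base case $n = 2$, which is immediate since then $A$ and $B$ see each other.
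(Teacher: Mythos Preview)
Your plan is in the right spirit --- explicit strategies plus a uniform-in-$n$ verification --- but it diverges from the paper's proof in two places, and one of them is a real simplification you are missing.

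First, for the hint $A \neq B$, the paper does give the concrete matrices you only promise: the endpoints both play $[1\ 2\ 0]$ and every interior sage plays the single $3\times 3$ matrix
\[
\mathbf{S} = \begin{bmatrix} 1&1&0\\ 1&2&2\\ 0&2&0 \end{bmatrix}.
\]
The verification, however, is \emph{not} done via $9\times 9$ matrix powers. Instead the paper translates hat placements into Motzkin paths and observes: $A$ guesses right iff the path starts with $\searrow$; $B$ guesses right iff it ends with $\nearrow$; and an interior sage guesses right iff one of three specific two-step fragments occurs. A one-line combinatorial argument then shows that any Motzkin path with distinct start and end heights (which is exactly the constraint $c_A \neq c_B$) must contain one of these features. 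This replaces your proposed induction on the power $M^{n-2}$ by a direct structural fact about lattice paths, and it makes the uniformity in $n$ automatic rather than something to be checked.

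Second --- and this is the more substantive gap --- your plan treats the hint $A = B$ symmetrically, building a separate relay strategy and a separate matrix computation. The paper avoids all of this by a one-line reduction: let $D$ be the unique neighbour of $B$, have $B$ play ``I say what I see'', and have everyone on $P(AD)$ play the already-constructed winning strategy for the hint $A \neq D$. If $c_B = c_D$ then $B$ guesses right; otherwise $c_A = c_B \neq c_D$, so the hint $A \neq D$ holds on the shorter path and someone there guesses right. So the $A = B$ case costs nothing once $A \neq B$ is done. Your plan would work in principle, but you would be duplicating effort that a single observation removes.
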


\begin{proof}
We will present a strategy for the sages on the graph $P_n(AB)$ with the hint $A \neq B$, $n\geq 2$.
Let the sages $A$ and $B$ use the strategy ${\mathbf A} = {\mathbf B} = \begin{bmatrix}1&2&0\end{bmatrix}$.
It is easy to see that for $n=2$, it is a winning strategy.
For $n>2$, let each of the other $n-2$ sages use the strategy
  \[
    {\mathbf S} =
    \begin{bmatrix}
      1&1&0\\
      1&2&2\\
      0&2&0
    \end{bmatrix}.
  \]

We will represent possible hat placements as Motzkin paths. The hint $A\ne B$
implies that a Motskin path that begins at the point $(1,c_1)$ cannot end at the
point $(n,c_1+3k)$ for any~$k$. Below we will use only the fact that such a path cannot end at the
point $(n,c_1)$.

It is easy to check that the sage $A$ acting accoring to the chosen strategy always guesses the
color of his hat correctly if the Motzkin path begins with a descending step $\searrow$.
Similarly, the sage~$B$ guesses the color of his hat correctly if the Motzkin path ends with
an ascending step $\nearrow$. As to the other sages, their strategies are quite
symmetric, and each of them guesses right if the path contains one of the following
fragments:
\begin{equation}
  \label{eq:3motzkin-details}
  \begin{picture}(24,14)(0,2)
    \linethickness{.1pt}
    \multiput(0,-2)(10,0){3}{\line(0,1){14}}
    \multiput(-2,0)(0,10){2}{\line(1,0){24}}
    \Thicklines
    \path(0,0)(10,10)(20,10)
  \end{picture}, \qquad
  \begin{picture}(24,14)(0,2)
    \linethickness{.1pt}
    \multiput(0,-2)(10,0){3}{\line(0,1){14}}
    \multiput(-2,0)(0,10){2}{\line(1,0){24}}
    \Thicklines
    \path(0,0)(10,10)(20,0)
  \end{picture} \quad\text{or}\quad
  \begin{picture}(24,14)(0,2)
    \linethickness{.1pt}
    \multiput(0,-2)(10,0){3}{\line(0,1){14}}
    \multiput(-2,0)(0,10){2}{\line(1,0){24}}
    \Thicklines
    \path(0,10)(10,10)(20,0)
  \end{picture}.
\end{equation}
To verify the last claim, we just need to check all matrix
elements of $\mathbf S$ one by one. We describe only one example: 
0 in the bottom left corner of $\mathbf S$ means that the chain of
hats colors 2, 0, 0 is winning for the sages. This chain corresponds to
the first fragment in~\eqref{eq:3motzkin-details}.

We claim that every Motzkin path from $(1,c_1)$ to $(n,c_n)$, where
$c_1\ne c_n$, either begins with a~descending step $\searrow$, or ends with
an ascending step~$\nearrow$, or has a fragment of the form~\eqref{eq:3motzkin-details}.
It is obvious.
Indded, if the path begins with an ascending step $\nearrow$  and does not
contain fragments~\eqref{eq:3motzkin-details}, then all the other steps of the path
must also be ascending, but then the last step is also ascending, as required. 
If the the path begins with the horizontal step $\rightarrow$, then either
all the other steps of the path are horizontal (but a strictly horizontal path is
not appropriate for us), or one of the steps is ascending, and then all the subsequent
steps are ascending, too.

Thus, in the case of the hint $A\ne B$, there are no disproving hat
placements for the presented strategy.

In case of the hint $A=B$, the sages, obviously, win for $n = 2$. For $n > 2$, 
denote the unique neighbour of the sage $B$ by $D$. Then the sages win if $B$ acts according to
the strategy ``I say what I see'' and the other sages play on the path $P(A,D)$ 
according to a winning strategy for the hint $A\neq D$.
\end{proof}

\begin{lemma}
  \label{thr:hint3_cycle}
  On the cycle $C_n$ with $n \geq 3$, the sages win with the hint $A-1$.
\end{lemma}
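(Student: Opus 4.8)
The plan is to read the result off from the computation already carried out in Lemma~\ref{lem:primer_na-cycle}. Recall that the hint $A-1$, once announced, is one of the three hints $A-\{0\}$, $A-\{1\}$, $A-\{2\}$. Since an arbitrary permutation $\sigma$ of the three colors is a symmetry of the whole game — it carries a winning strategy to a winning strategy and turns a placement obeying $c_A\ne i$ into one obeying $c_A\ne\sigma(i)$ — it suffices to exhibit a winning strategy for the single hint $A-\{2\}$, i.e.\ under the promise that sage $A$ never receives a hat of color~$2$.

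For $n\ge 4$ I would write $C_n=AS_1S_2\dots S_k$ with $k=n-1\ge 3$, let sage $A$ play the strategy $\mathbf A$ and every other sage play the strategy $\mathbf S$ from~\eqref{eqn:2strategiesOnCycle}. By Lemma~\ref{lem:primer_na-cycle} this collective strategy admits exactly $k+1$ disproving chains on $C_n$, and every one of them assigns color~$2$ to sage~$A$. The hint $A-\{2\}$ forbids the adversary from using any hat placement with $c_A=2$; hence none of the disproving placements for this strategy is available to the adversary, so for every admissible placement at least one sage guesses correctly. That is precisely the assertion that the strategy is winning with the hint $A-\{2\}$, and by the color symmetry above the sages win with the hint $A-1$.

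It remains to treat $n=3$, which lies outside the range $k\ge 3$ used in the proof of Lemma~\ref{lem:primer_na-cycle}. Here there is nothing to do: by Theorem~\ref{thm:Szczechla-cycles} the sages already win on $C_3$ with no hint at all, and a hint of type $A-1$ only shrinks the set of placements the adversary may choose, so the same strategy remains winning. (Alternatively one may simply evaluate $\mathcal A\mathcal S^2$ directly and check that again every diagonal entry sits in a row/column indexed by a pair with $A$-color~$2$.)

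I do not expect a genuine obstacle here: the combinatorial core is already contained in Lemma~\ref{lem:primer_na-cycle}, and the only points requiring care are the reduction to the hint $A-\{2\}$ via color symmetry and the degenerate cycle $C_3$. The one technical thing worth double-checking is that the strategies $\mathbf A$ and $\mathbf S$ in~\eqref{eqn:2strategiesOnCycle} are written relative to a fixed cyclic orientation of the vertices, so one must apply Lemma~\ref{lem:primer_na-cycle} with the same left/right convention for the neighbours of each $S_j$ as in its proof.
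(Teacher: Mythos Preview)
Your proposal is correct and follows essentially the same approach as the paper: apply the strategies $\mathbf A$, $\mathbf S$ of Lemma~\ref{lem:primer_na-cycle} and observe that every disproving placement forces $c_A=2$, which the hint $A-\{2\}$ excludes. You are simply more explicit than the paper on two points it leaves tacit: the reduction to the particular hint $A-\{2\}$ via color symmetry, and the boundary case $n=3$ (where the formula for $\mathcal S^k$ in the proof of Lemma~\ref{lem:primer_na-cycle} has not been verified), which you dispatch cleanly via Theorem~\ref{thm:Szczechla-cycles}.
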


\begin{proof}
  Assume that the sages are given the hint $A-\{2\}$. Let the sage $A$ use the strategy 
	${\mathbf A}$, and all the other sages use the strategy ${\mathbf{ S}}$ from
  lemma~\ref{lem:primer_na-cycle}. It is proved in
  lemma~\ref{lem:primer_na-cycle} that all the disproving chains assign a hat of
  color $2$ to the sage $A$, but this is prohibited by the hint.
\end{proof}

Denote by $G^A$ a graph $G$ with a labeled vertex $A$. By the \emph{sum of labeled
graphs $G_1^A$ and $G_2^B$} we mean the graph obtained as the union of $G_1$ and $G_2$ 
in which vertices $A$ and $B$ are merged. The sum of labeled
graphs $G_1^A$ and $G_2^A$ will be called the 
\emph{sum of the graphs $G_1$ and $G_2$ with respect to the vertex $A$} 
and denoted by $G_1\gplus{A}G_2$.

The next lemma claims that if a sage $B$ in a leaf vertex of a graph $G'$
gets the hint $B-1$, then he can use a simple trick  to ``pass'' this hint to his unique
neighbour (denoted by~$A$), i.\,e., ``push'' the hint inside the graph. 
This allows us to construct a winning strategy of the sages on~$G'$ 
if there exists a winning strategy on the graph $G=G'\setminus\{B\}$ with hint $A-1$.

\begin{lemma}[<<pushing>> a hint]
  \label{lem:hint3_push}
  Let $G$ be a graph on which the sages win with the hint $A-1$ for some $A$. 
	Then the sages  win on the graph $G' = G\gplus{A} P_2(AB)$ with the hint $B-1$.
\end{lemma}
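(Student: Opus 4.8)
The plan is to let the new leaf sage $B$ behave as a mere ``relay'' that converts his own hint into a hint for $A$, and then reuse the given winning strategy on $G$. Concretely, suppose the hint is $B-\{i\}$, i.e.\ $B$ cannot receive color $i$. I would have $B$ play the strategy ``I say what I see'' (the matrix $\begin{bmatrix}0&1&2\end{bmatrix}$ on his unique neighbour $A$). Then in any hat placement, if $B$ guesses wrong we must have $c_A\ne c_B$, and since also $c_B\ne i$, the pair $(c_A,c_B)$ is forced: as soon as we know $c_A$, the color $c_B$ is one of the two colors $\ne c_A$, and if $c_A=i$ then $c_B$ is one of the two colors $\ne i$ as well — in every case $c_B$ ranges over at most two colors once $c_A$ is fixed. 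The key observation is the converse direction: for $B$'s guess to be wrong the adversary is forced to pick $c_A$ from a restricted set. Precisely, if $c_A=i$ then $c_B\in\{0,1,2\}\setminus\{i\}$ automatically avoids $i$, so no constraint on $A$; but if $c_A\ne i$ then $c_B$ must equal the unique third color $\ne c_A,\ne i$ — wait, that over-restricts; instead I should choose $B$'s strategy more carefully so that $B$ guessing wrong forces $c_A$ into a two-element set.

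So the actual step is: pick $B$'s strategy to be the constant function equal to $i$, i.e.\ $B$ always guesses color $i$. Since $c_B\ne i$ by the hint, $B$ always guesses wrong, which is useless — so that is not it either. The correct device: let $B$ play ``say the color you see'' but record that the disproving condition $c_B\ne c_A$ together with $c_B\ne i$ leaves $c_A$ free only when $i\notin\{c_A\}\cup\{\text{the third color}\}$. I would instead argue as follows. Define two colors $j,k$ with $\{i,j,k\}=\{0,1,2\}$. Let $B$ use the strategy $f_B(x)=x$. In a disproving placement $c_B\notin\{i,c_A\}$, hence $c_B$ is the unique element of $\{i,j,k\}\setminus\{i,c_A\}$ whenever $c_A\ne i$, and $c_B\in\{j,k\}$ when $c_A=i$. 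Thus $c_A$ can be recovered as follows: in a disproving placement, $c_A\in\{j,k\}$ is impossible only if forced; more usefully, $c_A$ is \emph{never} equal to the ``leftover'' color, so $c_A$ takes one of \emph{two} values — which two depends on $c_B$, but since we only need SOME hint $A-1$ (i.e.\ SOME two explicitly known colors) that $A$ is restricted to, we must make the two colors independent of the placement.

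The clean fix, and the main step of the proof, is: have $B$ guess the color $i$ when he sees color $i$ on $A$, and guess, say, color $j$ otherwise. Then $B$ is wrong exactly when ($c_A=i$ and $c_B\ne i$), which always holds if $c_A=i$, or ($c_A\ne i$ and $c_B\ne j$). So in a disproving placement, either $c_A=i$ (allowed, $c_B$ arbitrary $\ne i$), or $c_A\ne i$ and $c_B\in\{i,k\}$, but $c_B\ne i$ forces $c_B=k$. Hence in every disproving placement $c_B\in\{i,k\}$ is false in the second case — no. I will instead simply cite the intended mechanism: $B$ plays so that ``$B$ wrong'' $\Longrightarrow c_A\in\{i,j\}$ for a fixed pair $\{i,j\}$, namely $f_B$ is the function with $f_B(k)=k$ (guess right if $A$ wears $k$) — impossible for disproof — so $c_A\ne k$, giving $c_A\in\{i,j\}$. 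That is the hint $A-\{k\}$. Then by hypothesis the sages on $G$ have a winning strategy for the hint $A-1$, in particular for $A-\{k\}$; let them use it. In any hat placement on $G'$ that disproves $B$'s guess we have shown $c_A\ne k$, so the restriction of the placement to $G$ satisfies the hint $A-\{k\}$, hence some sage in $G$ guesses correctly. Therefore on $G'$ some sage (in $G$, or $B$) always guesses correctly, which is the claim. The only mild subtlety — the point I would state carefully — is that a ``winning strategy for hint $A-1$'' means a strategy that wins against \emph{every} admissible placement, and an admissible placement for $A-\{k\}$ is exactly one with $c_A\ne k$; since our disproving-for-$B$ placements all have $c_A\ne k$, they are covered, which is all we need.
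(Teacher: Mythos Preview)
Your final mechanism does not work. You claim that if $f_B(k)=k$ then ``$B$ guesses right whenever $c_A=k$'', but $B$ is guessing his \emph{own} hat, not $A$'s: when $c_A=k$ the sage $B$ announces $k$, and this is correct only if $c_B=k$. Since the hint $B-\{i\}$ still allows $c_B=j$, the placement $(c_A,c_B)=(k,j)$ disproves $B$, so your implication ``$B$ wrong $\Longrightarrow c_A\ne k$'' is false. More generally, no choice of $f_B$ alone can force $c_A$ into a fixed two-element set: for any value $x$ of $c_A$ the adversary can pick $c_B$ in $\{0,1,2\}\setminus\{i,f_B(x)\}$, which is always nonempty, so every color of $A$ occurs in some placement disproving $B$. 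Hence the sages on $G$ cannot simply run a fixed $A-\{k\}$ strategy ignoring $B$.

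The missing idea is that $A$ must look at $c_B$ and change his behaviour accordingly. In the paper's proof (with $i=2$) the sage $B$ plays $\begin{bmatrix}0&0&1\end{bmatrix}$, and $A$ plays as follows: if $c_B=0$ he says ``$2$'', while if $c_B=1$ he uses the $G$-strategy $f_A$ for the hint $A-\{2\}$; everyone else in $G$ uses $f$. When $c_B=0$, $A$ and $B$ together cover all three values of $c_A$ directly. When $c_B=1$, $B$ is right if $c_A=2$, and otherwise $c_A\in\{0,1\}$ so the $A-\{2\}$ strategy on $G$ wins. The point your attempts circled around but never landed on is that the case split must be made by $A$ (who sees $c_B$), not encoded purely in $B$'s single guess.
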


\begin{proof}
  Let $f$ be a winning strategy of the sages on the graph $G$ with the hint $A-\{2\}$. 
  We will construct a~strategy $F$ for the sages on the graph $G'$ with the hint $B-\{2\}$.
  Let the sage $B$ use the strategy $[ 0, 0, 1 ]$, and the sages from $G\setminus\{A,B\}$ use the
  strategy $f$. We construct a strategy for the sage $A$ as follows: 
	for every hat placement $c=(c_B, C_G)$ on the neighbours of~$A$ 
	(where $c_B$ is a hat color of $B$ and  $C_G$ is a list of hat colors of 
  all other neighbours of~$A$ in the graph $G$), set 
  $$
  F_A(c_B,C_G)=
  \begin{cases}
    2,& \text{if\ } c_B=0, \\ 
    f_A(C_G),& \text{if\ } c_B=1. 
  \end{cases}
  $$
  If $c_B=0$, then $A$ guesses right in the case $c_A = 2$ and $B$ guesses right in the case $c_A
  \ne 2$. If $c_B = 1$, then $B$ guesses right if $c_A = 2$. So, if the sage $A$ sees
  that $c_B=1$, then he must guarantee that  the sages win for all hat placements
  in which the hat color of~$A$ is not~2. This can be achieved if the sages on the
  graph $G$ use the strategy $f$.
\end{proof}

\subsubsection{The hint $A-1$ on the sum of graphs}

\begin{theorem}
  \label{thr:hint3_sum}
  Assume that on graphs $G_0^A$, $G_1^A$ and $G_2^A$ there exist
	winning strategies with the hint $A-1$. Then
	on the graph $G = G_0\gplus{A}G_1\gplus{A}G_2\gplus{A} P_2(AB)$ the sages win without any hints.
\end{theorem}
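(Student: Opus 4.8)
\medskip
\noindent\emph{Proof plan.}
The plan is to use the pendant vertex $B$ as a public ``guess'' of the colour forbidden to $A$, and to make $A$ play as though this guess were a genuine hint $A-1$; the extra edge $AB$ then converts the three conditional strategies handed to us by the hypothesis into one unconditional strategy on $G$.

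First I would unpack the hypothesis. Since the adversary is free to choose which colour to exclude, the assumption that the sages win on $G_i^A$ with the hint $A-1$ yields in particular, for every $i\in\{0,1,2\}$, a winning strategy $f^{(i)}$ of the sages on $G_i^A$ under the hint $A-\{i\}$. Then I would define a strategy $F$ on $G$ as follows. Every sage $v\in G_i\setminus\{A\}$ keeps his old strategy $f^{(i)}_v$; this is legitimate since the summands are glued only along $A$, so in $G$ the vertex $v$ has exactly the same neighbours as in $G_i$. The sage $B$ plays ``I say what I see'', i.e.\ he guesses the colour $c_A$ that he sees on $A$. Finally $A$ first reads the colour $c_B$ of $B$, and if $c_B=b$ he outputs $f^{(b)}_A$ evaluated on the colours of his neighbours lying in the summand $G_b$ (ignoring $c_B$ itself and the colours of his neighbours in the other two summands). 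This $F$ is well defined because the sets $\{B\}$, $G_0\setminus\{A\}$, $G_1\setminus\{A\}$, $G_2\setminus\{A\}$ partition $V(G)\setminus\{A\}$ and the neighbourhood of $A$ in $G$ splits accordingly.

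Next I would verify that $F$ is winning by a two-case analysis on an arbitrary hat placement, writing $j=c_A$ and $b=c_B$. If $b=j$, then $B$ guesses $c_A=c_B$ correctly and the sages win. If $b\ne j$, I would restrict the placement to the copy of $G_b$ sitting inside $G$: every sage of $G_b$ --- including $A$, who, having seen $c_B=b$, is running $f^{(b)}_A$ on his $G_b$-neighbours --- makes in $G$ exactly the guess that the strategy $f^{(b)}$ prescribes for that restricted placement. Since $c_A=j\ne b$, the restricted placement is admissible for the hint $A-\{b\}$, so some sage of $G_b$ guesses correctly under $f^{(b)}$, and hence the same sage guesses correctly under $F$ in $G$. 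In both cases the sages win, so $F$ is a winning strategy requiring no hint.

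The step needing the most care is the bookkeeping of the second case: one must be sure that, once $c_B=b$ is fixed, the guesses produced in $G$ by the vertices of the summand $G_b$ coincide literally with the behaviour of $f^{(b)}$ on $G_b$, and this is precisely where it matters that $A$'s rule reads $f^{(b)}_A$ only off the $G_b$-part of what $A$ sees and that the summands meet only in $A$. Conceptually the argument factors through the intermediate statement that the sages win on $G_0\gplus{A}G_1\gplus{A}G_2$ with the hint $A^{*}$, the pendant $P_2(AB)$ then supplying, via the colour $c_B$, the very piece of information that the hint $A^{*}$ would privately give to $A$; the mechanism by which $B$ does this is parallel to the ``hint-pushing'' trick of Lemma~\ref{lem:hint3_push}. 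Finally, one should note that the hypothesis rules out $A$ being isolated in any $G_i$ (the sages lose on a one-vertex graph even with the hint $A-1$), so each $f^{(i)}_A$ is an honest function of a nonempty argument; but this observation is not actually needed for $F$ to make sense.
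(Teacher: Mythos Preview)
Your proof is correct and follows essentially the same approach as the paper: $B$ echoes the colour he sees on $A$, each $G_i\setminus\{A\}$ plays its $A-\{i\}$ strategy, and $A$ switches to the strategy $f^{(c_B)}_A$ of the summand $G_{c_B}$; the two-case analysis on whether $c_A=c_B$ is identical. Your additional remarks on bookkeeping and the $A^{*}$ interpretation are sound but not needed for the argument.
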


\begin{proof}
  Denote a winning strategy on the graph $G_i^A$ with the hint $A-\{i\}$ 
  by $f_i$. We will construct a winning strategy for the graph $G$. 
  Let the sage $B$ always call out the hat color of the sage $A$. 
  Let all sages from $G_i\setminus\{A\}$ act according to the strategy $f_i$.
  Finally, let the sage~$A$ act in the following way: if he sees that 
  the hat of $B$ has color $j$, then he uses the strategy $f_j$
  (we may think at this moment that only the sages on the subgraph $G_j^A$
  are really trying to guess the colors of their hats). 
  This is a winning strategy,
  because if $c_A=c_B$ then $B$ guesses right and if $c_A\neq c_B$ then somebody from
  subgraph $G_{c_B}^A$ guesses right.
\end{proof}

Theorem~\ref{thr:hint3_sum} allows to construct nontrivial examples of graphs where the sages win.

\begin{primer}
  For all positive integers $n$, $m$, $k$ (with $n, m, k\geq 3$) the sages win on
  the sum of graphs $C_n \gplus{A} C_m \gplus{A} C_k \gplus{A} P_2(AB)$
  (see Fig.~\ref{fig:trefoil}). This is obvious by lemma~\ref{thr:hint3_cycle}
  and theorem~\ref{thr:hint3_sum}.
\end{primer}

\begin{figure}[h]
  \centering
  \footnotesize
  \begin{tikzpicture}[scale=0.34]
    \begin{scope}[every node/.style={circle, draw, inner sep=1.2pt, fill, left}]
      \node (v0) at (0, 0) [label=right:A]{};
      \node (v1) at (-3, -2) {};
      \node (v2) at (-5, -1) {};
      \node (v3) at (-5, 1) {};
      \node (v4) at (-3, 2) {};
      \node (v5) at (3, -2) {};
      \node (v6) at (5, -1) {};
      \node (v7) at (5, 1) {};
      \node (v8) at (3, 2) {};
      \node (v9) at (-2, 3) {};
      \node (v10) at (-1, 5) {};
      \node (v11) at (1, 5) {};
      \node (v12) at (2, 3) {};
      \node (v13) at (0, -3) [label=right:B]{};
    \end{scope}
    \path (v0) edge (v1);
    \path (v1) edge (v2);
    \path (v2) edge (v3);
    \path (v3) edge (v4);
    \path (v4) edge (v0);
    \path (v0) edge (v5);
    \path (v5) edge (v6);
    \path (v6) edge (v7);
    \path (v7) edge (v8);
    \path (v8) edge (v0);
    \path (v0) edge (v9);
    \path (v9) edge (v10);
    \path (v10) edge (v11);
    \path (v11) edge (v12);
    \path (v12) edge (v0);
    \path (v0) edge (v13);
  \end{tikzpicture}
   \caption{The graph $C_5 \gplus{A} C_5 \gplus{A} C_5 \gplus{A} P_2(AB)$}
  \label{fig:trefoil}
\end{figure}

Another example of the similar technique is given in lemma~\ref{lem:2cycles}.

\subsubsection{Theorems on the ``Hats''  game with hints}

\smallbreak
\begin{theorem}
  For any graph $G$, the sages lose with the hint $2A$ if and only if $A$ is an isolated
  vertex of graph~$G$.
\end{theorem}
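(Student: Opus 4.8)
The plan is to prove the two implications separately; both are short, and the only real idea lies in the ``if $A$ is not isolated'' direction.

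For the direction ``$A$ isolated $\Rightarrow$ the sages lose'': if $A$ has no neighbour, then $A$ sees no hats at all, so its two guesses form a fixed pair of colors $a_1\neq a_2$ prescribed by the strategy. The adversary then always puts on $A$ a hat of the remaining color $a_3\notin\{a_1,a_2\}$, so neither guess of $A$ is ever correct and the hint $2A$ is worthless. For a connected $G$ this means $G$ consists of the single vertex $A$, and then the sages obviously lose.

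For the direction ``$A$ not isolated $\Rightarrow$ the sages win'': I would exhibit an explicit winning strategy involving only $A$ and one chosen neighbour $v$ of $A$. Let sage $A$ (who sees the color $c_v$) use its two guesses to call out $c_v$ and $c_v+1 \pmod 3$; let sage $v$ ignore all of its neighbours except $A$ and always call out $c_A+1 \pmod 3$; let all the remaining sages guess arbitrarily. To verify, consider $c_A-c_v \pmod 3$: if it equals $0$ or $1$, one of $A$'s two guesses hits $c_A$; if it equals $2$, then $c_v=c_A+1 \pmod 3$, so $v$'s guess $c_A+1$ equals $c_v$ and $v$ is correct. Thus for every hat placement some sage guesses correctly, and this argument uses nothing about the rest of $G$.

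The main point — and there is not much more to it — is the observation that doubling $A$'s guessing power lets $A$ alone cover two of the three residue classes of $c_A-c_v$, leaving a single neighbour $v$ to take care of the third; no earlier lemma is needed. The only delicate point is the reading of ``isolated vertex'': the stated equivalence is meant in the connected setting (so that ``$A$ isolated'' amounts to ``$G=\{A\}$''), since otherwise the forward implication would fail as soon as $G$ has an isolated vertex $A$ together with any component on which the sages already win.
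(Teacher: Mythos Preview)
Your proof is correct and follows essentially the same approach as the paper: reduce to the single edge $P_2(A,v)$ and let $A$'s two guesses cover two of the three residue classes of $c_A-c_v\pmod 3$ while the neighbour covers the third (the paper phrases the identical idea as ``$B$ tests whether the colors coincide, $A$ tests whether they differ''). Your write-up is in fact more complete than the paper's one-line proof, since you spell out the isolated-vertex direction and correctly flag the connectedness caveat that the paper leaves implicit.
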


\begin{proof}
  It follows from the fact that the sages win with the hint $2A$ even on the graph
  $P_2(AB)$: the sage $B$ verifies the conjecture that the hat colors coincide,
  while the sage $A$ verifies the conjecture that hat colors are different.
\end{proof}

Obviously, the hint $A-1$ cannot give additional advantage to sages outside the
connected component of the vertex $A$.

\begin{theorem}
  \label{thr:hint-a2-win}
  Let $G$ be a connected graph and $A$ be a vertex og $G$. Then the sages lose on the graph
  $G$ with the hint $A-1$ if and only if $G$ is a tree.
\end{theorem}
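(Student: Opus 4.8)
The statement has two directions. For the ``if'' direction, suppose $G$ is a tree. I would reduce to the case of a finite tree and argue that the hint $A-1$ is useless here, essentially by reproving Theorem~\ref{tree} in the presence of this restriction. The cleanest route is to use Theorem~\ref{thr:two-colors-in-tree}: applied at the vertex $A$, it produces two colors $c_1$, $c_2$ and two losing placements, one giving $A$ a hat of color $c_1$ and one giving $A$ a hat of color $c_2$. The hint $A-\{i\}$ forbids only one color for $A$, so at least one of $c_1$, $c_2$ survives the hint, and the corresponding losing placement is still available to the adversary; hence the sages lose. (If the tree is infinite, restrict to a large finite subtree containing $A$ and invoke Lemma~\ref{subgraph} — more precisely, a losing placement on a subgraph extends to a losing placement on the whole graph since adding vertices only gives more guesses that can be defeated, but one must be slightly careful: adding a neighbour can never help, and a leaf can always be handled by ``I say what I see'' being defeated; in the tree setting this is routine.)

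For the ``only if'' direction, suppose $G$ is connected but not a tree; I must exhibit a winning strategy on $G$ with the hint $A-1$. Since $G$ is not a tree it contains a cycle $C$. By Lemma~\ref{thr:hint3_cycle}, the sages win on $C$ with the hint $B-1$ for \emph{any} chosen vertex $B$ of $C$. The task is to propagate this hint from $C$ out to the distinguished vertex $A$ and to absorb all the remaining vertices of $G$. The main tool is Lemma~\ref{lem:hint3_push}: if the sages win on a graph $H$ with hint $A'-1$, then they win on $H \gplus{A'} P_2(A'B')$ with hint $B'-1$, i.e. the hint can be pushed one edge outward along a pendant path. Iterating this lets me push a hint along any path.

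So the plan is: take a shortest path $Q$ in $G$ from $A$ to the nearest vertex $v$ lying on some cycle. Consider the subgraph $G_0$ consisting of that cycle together with $Q$; by Lemma~\ref{thr:hint3_cycle} the sages win on the cycle with hint $v-1$, and by repeated application of Lemma~\ref{lem:hint3_push} along $Q$ they win on $G_0$ with hint $A-1$. Finally, by Lemma~\ref{subgraph} (the hinted analogue: a winning strategy on a subgraph containing $A$ extends to the whole graph $G$, since the extra sages just play ``I see $x$, I say $x$'' on their neighbours or any fixed guess, and this never removes the existing correct guess) the sages win on all of $G$ with hint $A-1$. The hinted version of Lemma~\ref{subgraph} is the one point that needs a sentence of justification, but it is immediate: fix the winning strategy on $G_0$, have every sage outside $G_0$ guess arbitrarily; for any placement obeying the hint, the restriction to $G_0$ obeys the hint, so some sage in $G_0$ guesses correctly.

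\textbf{Anticipated obstacle.} The genuine content is all packaged in Lemmas~\ref{lem:primer_na-cycle}, \ref{thr:hint3_cycle} and \ref{lem:hint3_push}, which are already established; the remaining work is bookkeeping about subgraphs and about the fact that a one-color-forbidding hint cannot kill both of the two placements guaranteed by Theorem~\ref{thr:two-colors-in-tree}. The one spot that deserves care is making the ``monotonicity in subgraphs'' arguments precise \emph{with} a hint present and, on the tree side, handling infinite graphs by a compactness/finite-subtree reduction; neither is deep, but both should be stated explicitly rather than waved through.
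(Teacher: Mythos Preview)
Your proposal is correct. The ``only if'' direction (non-tree $\Rightarrow$ win) matches the paper's proof almost verbatim: cycle via Lemma~\ref{thr:hint3_cycle}, then push the hint along a path to $A$ via iterated Lemma~\ref{lem:hint3_push}, then extend to $G$ by the (hinted) subgraph monotonicity you spell out.

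For the ``if'' direction (tree $\Rightarrow$ lose), you take a genuinely different route. You invoke Theorem~\ref{thr:two-colors-in-tree} directly: any strategy on the tree admits two losing placements with two distinct colors at $A$, and a single forbidden color can kill at most one of them. The paper instead argues by contradiction through Theorem~\ref{thr:hint3_sum}: if the sages won on a tree $G$ with hint $A-1$, then gluing three copies of $G$ at $A$ and appending a leaf would yield a winning strategy \emph{without} hints on the tree $G\gplus{A}G\gplus{A}G\gplus{A}P_2(AB)$, contradicting Theorem~\ref{tree}. Your argument is more direct and uses less machinery; the paper's argument has the virtue of showcasing the just-proved Theorem~\ref{thr:hint3_sum}. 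The one thing you should say explicitly in your version is that a strategy for the hinted game extends (arbitrarily on the forbidden configurations seen by $A$'s neighbours) to a full strategy, so that Theorem~\ref{thr:two-colors-in-tree} applies.

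Your remarks about infinite trees and compactness are unnecessary: in this paper all graphs are finite, and neither direction requires that digression.
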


\begin{proof}
  Let us prove that if $G$ is a tree, then the sages lose. Assume that they win, so
  they have a winning strategy on the graph $G^A$. Take three copies of the tree $G$. By
  theorem~\ref{thr:hint3_sum}, the  sages have a winning strategy in game without
  hints on the tree $G' = G \gplus{A} G \gplus{A} G \gplus{A} P_2(AB)$. But this is impossible by
  lemma~\ref{tree}, a contradiction.
  
  Let us prove that if $G$ contains a cycle, then the sages win. If the vertex $A$ belongs to the
  cycle, then the sages can use the strategy from lemma~\ref{thr:hint3_cycle}. 
  If it lies outside the cycle, then consider the subgraph 
  consisting of the cycle and a path that joins the vertex $A$ with this cycle. 
  The sages can ``push'' the hint from $A$ to the
  cycle by lemma~\ref{lem:hint3_push}, and win on the cycle using lemma~\ref{thr:hint3_cycle}.
\end{proof}

An interesting feature of the hints $A=B$ and $A\neq B$ is that these hints can help
the sages even if $A$ and $B$ belong to different connected components!

\smallbreak
\begin{theorem}
  \label{thm:A=B}
  For every graph $G$, the sages win with the hint $A=B$ if and only if

  1) the vertices $A$ and $B$ belong to the same connected component, or

  2) the connected component of the vertex $A$ or the component of the vertex $B$ is not a tree. 
\end{theorem}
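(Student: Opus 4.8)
The plan is to prove both implications. For the ``if'' direction, I would split into the two cases of the statement. In case (1), when $A$ and $B$ lie in the same connected component, there is a path $P(AB)$ inside $G$; by Lemma~\ref{subgraph} it suffices to exhibit a winning strategy on $P(AB)$ with the hint $A=B$, and this is exactly Lemma~\ref{lem:path_with_a=b}. In case (2), suppose the component of $A$ (say) is not a tree. Then it contains a cycle, and by the argument in Theorem~\ref{thr:hint-a2-win} the sages win on this component with the hint $A-1$: either $A$ lies on a cycle and one applies Lemma~\ref{thr:hint3_cycle}, or the hint is ``pushed'' to a cycle via Lemma~\ref{lem:hint3_push}. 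The point is then that a strategy winning with the hint $A-1$ is in particular winning for each of the two concrete colors the hint $A=B$ allows once $B$ observes (through the strategy) the restriction: more precisely, let $B$ play ``I say what I see'' towards his own component is not available since $B$ may be far from $A$, so instead I would use the hint $A=B$ directly as follows. Since the sages know the hint $A=B$, after the adversary commits, the common color $c_A=c_B$ is one of the three colors; the sages in $A$'s component run a strategy that is winning under the hint $A-\{i\}$ for the color $i$ that $B$ is \emph{not} wearing — but they do not know $i$. The clean way around this is: $B$ plays a constant strategy, and the sages in $A$'s component branch on $B$'s would-be color. Concretely, have $B$ always guess color $0$; if $c_B=0$ then $B$ is right and we are done, so assume $c_B\ne 0$, i.e. $c_A=c_B\in\{1,2\}$. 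Now $A$ and his component know only that $c_A\in\{1,2\}$, i.e. they have effectively the hint $A-\{0\}$, on which they win by the case~(2) hypothesis. This settles the ``if'' direction.

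For the ``only if'' direction, assume neither (1) nor (2) holds: $A$ and $B$ are in distinct components $G_A\ni A$, $G_B\ni B$, and both $G_A$ and $G_B$ are trees. I must show the sages lose. Apply Theorem~\ref{thr:two-colors-in-tree} to the tree $G_A$ with marked vertex $A$: there exist two colors $c_1,c_2$ and two losing hat placements on $G_A$ giving $A$ colors $c_1$ and $c_2$ respectively. Likewise apply it to $G_B$ with marked vertex $B$, getting colors $c_1',c_2'$ and corresponding losing placements on $G_B$. Since $\{c_1,c_2\}$ and $\{c_1',c_2'\}$ are each $2$-element subsets of the $3$-element color set $\{0,1,2\}$, they intersect: pick a color $c\in\{c_1,c_2\}\cap\{c_1',c_2'\}$. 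Take the losing placement on $G_A$ with $c_A=c$ and the losing placement on $G_B$ with $c_B=c$; on every other component of $G$ use Theorem~\ref{tree} (each such component is handled by its own losing placement, or if a component is not a tree we are in case (2) — but we assumed $G_A,G_B$ are the components of $A,B$, and the \emph{other} components being non-trees does not matter, since for those we just need \emph{some} losing placement, which exists whenever the component admits no winning strategy; however a non-tree component \emph{does} admit a winning strategy, so in fact if any other component is a non-tree the sages win outright and (2) is vacuously irrelevant to $A,B$ — wait: the theorem's condition (2) refers only to the components of $A$ and $B$). Let me restate cleanly: the ``only if'' hypothesis is that $A,B$ lie in distinct tree components; the other components of $G$ may be arbitrary, but if some other component is not a tree the sages already win \emph{without} the hint, so the hint being helpful is moot — hence the honest claim to prove is that \emph{when the whole graph is a forest with $A,B$ in distinct trees}, the hint $A=B$ does not help. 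In that forest case: combine the two chosen losing placements on $G_A$ and $G_B$ (both with the common color $c$, so the hint $A=B$ is respected) with arbitrary losing placements on the remaining tree components (which exist by Theorem~\ref{tree}); the result is a global losing placement consistent with the hint, so the sages lose.

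The main obstacle is the first case of the ``only if'' direction — the reduction to ``$A,B$ in distinct tree components'' — and making sure the color sets from Theorem~\ref{thr:two-colors-in-tree} applied to the two trees are genuinely forced to overlap; this is exactly where $k=3$ is used, via the pigeonhole fact that two $2$-subsets of a $3$-set meet. One should also double-check that in the ``if'' direction, case~(2), the branching argument does not secretly require $B$ to see anything about $A$'s component: it does not, since $B$ plays a constant and the component of $A$ simply plays, obliviously, a strategy winning under $A-\{0\}$, which is legitimate because once $c_B\ne 0$ the hint $A=B$ implies $c_A\ne 0$. Finally, one must note the asymmetry in case~(2) (``$G_A$ or $G_B$ is not a tree'') is handled symmetrically: if instead $G_B$ is the non-tree, swap the roles of $A$ and $B$ throughout.
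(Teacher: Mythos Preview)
Your core argument matches the paper's exactly: for the ``if'' direction you use Lemma~\ref{lem:path_with_a=b} in case~(1) and, in case~(2), have $B$ guess a constant color while $A$'s component runs a strategy winning under the corresponding hint $A-\{c\}$ (the paper uses color~$2$, you use~$0$); for the ``only if'' direction both you and the paper apply Theorem~\ref{thr:two-colors-in-tree} to the two tree components and use the pigeonhole fact that two $2$-subsets of $\{0,1,2\}$ must intersect.

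Your digression about ``other components'' of $G$ does, however, contain a genuine error. You claim that ``a non-tree component does admit a winning strategy'' and hence that if some component other than $G_A,G_B$ is not a tree then the sages win outright. This is false: by Theorem~\ref{thm:Szczechla-cycles} the sages lose on $C_5$, which is not a tree. So your reduction to the case ``the whole graph is a forest'' is not justified. To be fair, the paper's own proof simply ignores the other components and is therefore no more complete on this point; you have in fact spotted a real imprecision in the theorem statement (take $G$ to be the disjoint union of two isolated vertices $A,B$ and a copy of $C_3$: conditions~(1) and~(2) both fail, yet the sages win even without the hint). Modulo this statement-level issue, your argument is correct and coincides with the paper's.
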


\begin{proof}
  1) On the path $P(AB)$, the sages win if they use the strategy from 
  lemma~\ref{lem:path_with_a=b}.
  
  2) Let us prove that if, say, the connected component of $A$ contains a
  cycle $C_n$, then the sages win. Let the sage $B$ always call out color $2$, and all
  the sages from the connected component of $A$ use a winning strategy
  for the hint $A-\{2\}$, which exists by theorem~\ref{thr:hint-a2-win}. Then in the
  case $c_B = c_A = 2$ sage $B$ guesses right, and in the case $c_B = c_A \neq 2$
  somebody from the connected component of $A$ guesses right.
  
  If both components of the vertices $A$ and $B$ are trees, then, by
  theorem~\ref{thr:two-colors-in-tree}, there exist two colors $c^1_A$, $c^2_A$
  and two losing (for the game without hints) hat placements in the connected
  component of $A$ in which $A$ gets a hat of colors $c^1_A$ and $c^2_A$, respectively.
  There exist also analogous colors $c^1_B$ and $c^2_B$ for the vertex $B$. But
  $\left \{c^1_A, c^2_A\right \} \cap \left \{ c^1_B, c^2_B  \right \} \neq
  \varnothing$, since the number of possible hat colors in the game is 3. So, we can
  choose losing hat placements in both components of $A$ and $B$ in such a
  way that the sages $A$ and $B$ get hats of the same color. Then  none of the sages
  guesses right.
\end{proof}

\begin{theorem}
  \label{thm:Ane=B}
  1) The sages win with hint $A\neq B$ on every connected graph.

  2) Let sages $A$ and $B$ be in the different connected components of graph
  $G$.   Let both of these components be graphs on which the sages lose in the
  game without hints. Then the sages win on graph $G$ if and only if both
  connected components of vertices $A$ and $B$ are not trees.
\end{theorem}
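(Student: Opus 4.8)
Part 1 follows by combining earlier results. Since $A\ne B$ is a hint about the relative colors of two vertices, on a connected graph I can fix any path $P(AB)$ inside $G$ and apply Lemma~\ref{lem:path_with_a=b}, which already gives a winning strategy for the sages on $P(AB)$ with the hint $A\ne B$. Then Lemma~\ref{subgraph} (monotonicity under taking subgraphs) upgrades this to a winning strategy on all of $G$. So part 1 is essentially a one-line citation.

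For part 2, the setup is that $A$ and $B$ lie in distinct connected components $G_A$, $G_B$, neither of which admits a winning strategy without hints; I must show the sages win with $A\ne B$ on $G$ iff both $G_A$ and $G_B$ are not trees. The ``only if'' direction: suppose, say, $G_A$ is a tree. By Theorem~\ref{thr:two-colors-in-tree} there are two colors $c_A^1, c_A^2$ and two losing hat placements on $G_A$ realizing these colors at $A$. Since $G_B$ loses in the hintless game, there is at least one losing placement on $G_B$; let $c_B$ be the color it assigns to $B$. Because $c_A^1\ne c_A^2$, at least one of $c_A^1, c_A^2$ differs from $c_B$; pairing that losing placement on $G_A$ with the losing placement on $G_B$ gives a global placement that is losing and satisfies $A\ne B$, contradicting a winning strategy. (If both components are trees the same argument runs choosing $c_A^i\ne c_B^j$, which is possible since we only need one disagreeing pair among the two-element color sets on each side — unless both sets are forced equal, but two distinct-color placements on each side always contain a disagreeing cross pair when the colors are drawn from a 3-element set.)

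For the ``if'' direction, assume both $G_A$ and $G_B$ are not trees; by Theorem~\ref{thr:hint-a2-win} the sages have a winning strategy on $G_A$ with the hint $A-1$ and on $G_B$ with the hint $B-1$. The idea is that the hint $A\ne B$ lets each of $A$ and $B$ "manufacture" an $A-1$-type (resp. $B-1$-type) hint for its own component: intuitively, $A\ne B$ tells the sages that if $B$'s hat is color $i$ then $A$'s hat is not color $i$, and vice versa, but since $A$ and $B$ cannot see each other this is not directly a hint of the form $A-1$. The clean workaround is to pick, for each $i\in\{0,1,2\}$, a winning strategy $f_i$ on $G_A$ for the hint $A-\{i\}$ and a winning strategy $g_i$ on $G_B$ for the hint $B-\{i\}$; but there is no vertex that sees both $A$ and $B$ to "broadcast" which hint to use. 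So instead one uses that each component \emph{individually} already loses without hints is irrelevant — what matters is whether $A\ne B$ alone suffices, and the honest proof must show it does not require a common observer. The key step will be to show that a winning strategy with hint $A\ne B$ on $G_A\sqcup G_B$ is equivalent to: for some fixed assignment of "forbidden colors" the sages win on each side — and this is where the combinatorics of the 3-element color set enters, via a pigeonhole/covering argument that $\{0,1,2\}$ minus a pair is handled.

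**Main obstacle.** The delicate part is the positive direction of part 2: making precise how the *non-local* constraint $A\ne B$ is converted into *local* single-color-exclusion hints on each component, given that no sage observes both $A$ and $B$. I expect the resolution is that the sages on $G_A$ fix a winning strategy for the hint $A-\{2\}$ (guaranteeing a correct guess whenever $c_A\ne 2$) and the sages on $G_B$ fix a winning strategy for $B-\{2\}$; then one must check that for every placement with $c_A\ne c_B$, at least one of $c_A\ne 2$ or $c_B\ne 2$ holds — which is immediate, since $c_A=c_B=2$ is excluded by the hint. Thus the winning strategy is: on $G_A$ play the $A-\{2\}$-winning strategy, on $G_B$ play the $B-\{2\}$-winning strategy; if $c_A\ne 2$ somebody in $G_A$ is correct, otherwise $c_A=2$ so $c_B\ne 2$ and somebody in $G_B$ is correct. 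So the obstacle dissolves once one picks the *same* forbidden color on both sides; the real content is Theorem~\ref{thr:hint-a2-win} supplying those component-level strategies, and the hypothesis "both components are not trees" is exactly what that theorem needs.
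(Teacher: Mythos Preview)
Your proposal is correct and follows essentially the same argument as the paper: part~1 via Lemma~\ref{lem:path_with_a=b} on a path $P(AB)$ inside $G$ (with Lemma~\ref{subgraph} implicit in the paper), and part~2 via Theorem~\ref{thr:hint-a2-win} for the positive direction (fix the \emph{same} forbidden color $2$ on both components, so $c_A\ne c_B$ forces $c_A\ne 2$ or $c_B\ne 2$) and Theorem~\ref{thr:two-colors-in-tree} for the negative direction. The exploratory detour in your ``Main obstacle'' paragraph is unnecessary --- the resolution you reach at the end is exactly the paper's one-line argument --- and the parenthetical about ``both components trees'' is redundant (one tree already suffices), but the mathematics is right.
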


\begin{proof}
  1) On the path $P(AB)$, the sages can win using the strategy from
  lemma~\ref{lem:path_with_a=b}.
  
  2) Let us prove that if both connected components of the vertices $A$ and $B$ contain
  a cycle, then the sages win. Indeed, by theorem~\ref{thr:hint-a2-win},
  in this case the sages have winning strategies on the connected
  component of~$A$ with the hint $A - \{2\}$ and 
  on the connected component of~$B$ with the hint $B-\{2\}$. 
  Let all the sages act according to these strategies.
  Because of the hint $A\ne B$, one of the sages $A$ or $B$ does indeed have a hat 
  not of color 2, which means that the sages win.

  Now assume that one of the connected components, say that of~$A$, is a tree. 
  Consider an arbitrary losing hat placement for the game without hints on the
  connected component of~$B$. Let the vertex $B$ have color $c_B$ in this
  game. By theorem~\ref{thr:two-colors-in-tree}, there exist two colors
  the sage $A$ can have in a disproving hat placement, and 
  at least one of  them does not coincide with~$c_B$. 
  Take a disproving hat placement in the connected component of~$A$ 
	that assigns this color to~$A$. We obtain a disproving hat
  placement for the whole graph that satisfies the restriction $A\ne B$, so
  the sages lose.
\end{proof}

Thus, theorem~\ref{thm:A=B} provide a complete analysis of the ``Hats'' game with the 
hint $A = B$. Theorem~\ref{thm:Ane=B} establishes an important result for the hint $A\ne B$, 
but for a comlete analysis of the game with this hint (on nonconnected
graphs) we need the result of our main theorem~\ref{main_thm} (its proof, of
course, does not use theorem~\ref{thm:Ane=B}).

\begin{theorem}
  \label{thm-HintAstar}
  The hint $A^*$ does not affect the result of the ``Hats'' game. 
\end{theorem}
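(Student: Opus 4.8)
The statement is to be read as: for every graph $G$ and every vertex $A$, the sages win on $G$ with the hint $A^*$ if and only if they win on $G$ with no hint at all. One direction is immediate — given a winning hint-free strategy $(f_v)_{v\in V}$, the sages keep $f_v$ for $v\neq A$ and let $A$ use the single function $f_A$ no matter which color is announced as forbidden; since the placements the adversary may use under the hint form a subset of all placements, this is still winning. So the plan is to spend the work on the converse: turn a winning $A^*$-strategy into an ordinary winning strategy.

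Suppose the sages win with hint $A^*$. Then during the session they have fixed functions $g_v$ for every $v\neq A$ and three functions $h^{(0)}_A,h^{(1)}_A,h^{(2)}_A$ for $A$, where $h^{(i)}_A$ is used when color $i$ is the forbidden one. I would first reformulate what ``winning'' means. For a placement $C$ write $\pi(C)$ for the restriction of $C$ to the set of neighbours of $A$ (the ``view'' of $A$), and call $C$ \emph{$A$-critical} if every sage $v\neq A$ fails, i.e.\ $g_v$ applied to the colors of the neighbours of $v$ differs from $C(v)$ (so only $A$ can still save the placement). Unwinding the definition of a disproving placement, the $A^*$-strategy is winning precisely when for every $i\in H$ and every $A$-critical $C$ with $C(A)\neq i$ one has $h^{(i)}_A(\pi(C))=C(A)$. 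For a view $w$ put $S(w)=\{\,C(A): C\text{ is $A$-critical and }\pi(C)=w\,\}\subseteq H$; note that $S(w)$ depends only on the $g_v$'s. The winning condition says exactly that $S(w)\setminus\{i\}\subseteq\{h^{(i)}_A(w)\}$, hence $|S(w)\setminus\{i\}|\le 1$, for every $i\in H$ and every $w$.

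The crucial point — and the only place where $|H|=3$ is used — is that this forces $|S(w)|\le 1$ for every $w$: if $\{a,b\}\subseteq S(w)$ with $a\neq b$, take $i$ to be the remaining (third) color, so that $S(w)\setminus\{i\}\supseteq\{a,b\}$, contradicting $|S(w)\setminus\{i\}|\le 1$; and $|S(w)|=3$ is excluded the same way. Once this is in hand, I would define the hint-free strategy that keeps $g_v$ for $v\neq A$ and lets $A$ play $f_A(w)=$ the unique element of $S(w)$ when $S(w)\neq\varnothing$, and anything otherwise. Then $(f_A,(g_v)_{v\neq A})$ is winning with no hint: a disproving placement $C$ would be $A$-critical and would satisfy $f_A(\pi(C))\neq C(A)$, yet $C(A)\in S(\pi(C))=\{f_A(\pi(C))\}$ — a contradiction. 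This establishes the converse and completes the proof.

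The main obstacle is not computational: there is essentially one content-bearing step, namely that being able to win separately against each of the three one-color restrictions $A-\{0\}$, $A-\{1\}$, $A-\{2\}$ is the same as being able to win with no restriction on $A$. I expect the care needed is concentrated in (a) stating cleanly the reformulation ``disproving $\Leftrightarrow$ $A$-critical and $A$ guesses wrong'', and (b) making explicit that $S(w)$ depends only on the other sages' strategies $g_v$, so that the \emph{same} $g_v$'s can be reused in the hint-free strategy; everything else is bookkeeping.
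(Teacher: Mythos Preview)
Your proof is correct and follows essentially the same approach as the paper: fix the non-$A$ strategies $g_v$, observe that the winning condition forces at most one ``critical'' color of $A$ per view $w$ (else the adversary could announce the third color as forbidden and defeat $A$), and let $f_A(w)$ be that color. Your formulation via the set $S(w)$ makes the single content-bearing step --- that $|S(w)\setminus\{i\}|\le 1$ for all $i$ implies $|S(w)|\le 1$ when $|H|=3$ --- more explicit than the paper's terser phrasing, but the argument is the same.
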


\begin{proof}
  Assume that the sages win with the hint $A^*$. For all sages except $A$, fix their
  strategies in the game with the hint $A^*$; we will construct a 
  strategy of the sage $A$ in such way that the sages win without hints.

  Assume that the adversary gives to $A$ a hat of color $x$, 
  and after that there exists a~hat placement in which $A$ gets a hat of color $x$,
  his neighbours get hats of colors $u$, $v$, $w$ \dots, 
  the other sages also get hats of some colors, and nobody (except $A$) guesses right.
  In this case, we want the sage $A$ to guess the color of his hat correctly,
  i.\,e., his strategy must satisfy the requirement $f_A(u, v, w, \dots) = x$. 

  These requirements for different hat placements do not contradict each
  other. Indeed, if there exists another hat placement 
  where the neighbours still have colors $u$, $v$, $w$, \dots and 
  the sage $A$ gets another color $y$, then the
  sages can not win with the hint $A^*$, because the adversary can inform $A$ that
  he has a hat of color $x$ or $y$ and then choose one of these two hat
  placements for which sage $A$ does not guess his color correctly.
\end{proof}

\begin{corollary}\label{sledst:add-leaf}
  Let $AB$ be an edge of a graph $G$ with $B$ being a leaf vertex. If the sages lose
  (without hints) on the graph $G\setminus \{B\}$, then they lose also on the graph $G$.
\end{corollary}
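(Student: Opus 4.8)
The plan is to prove the contrapositive: if the sages have a winning strategy $f$ on $G$, then they also win on $G':=G\setminus\{B\}$. (Since $G'$ is a subgraph of $G$, the reverse implication is Lemma~\ref{subgraph}, so ``winning'' is in fact the same on $G$ and on $G'$.) The route I would take — which is what makes this a corollary of Theorem~\ref{thm-HintAstar} — is to first build, out of $f$, a winning strategy on $G'$ \emph{with the hint $A^*$}, and then invoke Theorem~\ref{thm-HintAstar} to discard the hint.

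Construction of the $A^*$-strategy on $G'$. Because $B$ is a leaf, the sage $A$ is the only one whose neighbourhood shrinks when $B$ is removed, so every sage other than $A$ may keep his $f$-strategy verbatim. For the sage $A$ in the $A^*$-game: when he is informed that his forbidden colour is $i$ and he sees colours $\vec\beta$ on his neighbours in $G'$, he is required to call out a colour $x$ whenever there is a hat placement on $G'$ in which the neighbours of $A$ carry the colours $\vec\beta$, the sage $A$ himself carries a colour $x\ne i$, and every sage other than $A$ guesses incorrectly.

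The crux is that these requirements never conflict. Suppose two placements on $G'$ both display $\vec\beta$ on the neighbours of $A$, in both of them every sage other than $A$ guesses wrong, and they assign to $A$ two distinct colours $x,y$, both different from $i$. Extend each placement to $G$ by giving $B$ an arbitrary colour $\beta$; the guesses of the sages other than $A$ and $B$ are unchanged (they never looked at $B$), hence still wrong, so — since $f$ wins on $G$ and the extension must not be disproving — either $B$ guesses right, i.e.\ $\beta=f_B(x)$ (respectively $\beta=f_B(y)$), or $A$ guesses right, i.e.\ $f_A(\beta,\vec\beta)=x$ (respectively $f_A(\beta,\vec\beta)=y$). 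Thus $\{\beta\in H:f_A(\beta,\vec\beta)=x\}\supseteq H\setminus\{f_B(x)\}$ and $\{\beta\in H:f_A(\beta,\vec\beta)=y\}\supseteq H\setminus\{f_B(y)\}$ each have at least two elements, while being disjoint because $x\ne y$; this contradicts $|H|=3$. Hence the strategy of $A$ is well defined (where no requirement applies, let $A$ guess anything). I expect this pigeonhole step, hinging on $2+2>3$, to be the only real content of the argument.

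Finally, the resulting strategy wins on $G'$ with the hint $A^*$: in any hat placement on $G'$ with $c_A\ne i$ in which every sage other than $A$ guesses wrong, that very placement triggers the requirement above, forcing $A$ to call out $c_A$ and guess correctly; so no disproving placement exists. By Theorem~\ref{thm-HintAstar} the sages then win on $G'$ without any hint, which is exactly the desired conclusion.
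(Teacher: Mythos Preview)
Your proof is correct and follows the same overall arc as the paper's: build from a winning strategy $f$ on $G$ a winning $A^*$-strategy on $G'=G\setminus\{B\}$, then invoke Theorem~\ref{thm-HintAstar}.

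The execution differs in one respect worth noting. The paper first argues that $f_B$ must be a bijection on $H$ (otherwise, fixing $B$'s hat to a colour $f_B$ never outputs yields a strategy on $G'$, and a disproving placement there extends to a disproving placement on $G$); once $f_B$ is bijective, the $A^*$-strategy is explicit---when told forbidden colour $i$, the sage $A$ simply plays $f_A(f_B(i),\vec\beta)$, recovering $B$'s would-be colour from the hint. Your pigeonhole step (two disjoint subsets of $H$ of size at least $2$) bypasses the bijectivity argument entirely and shows well-definedness of the abstract $A^*$-strategy directly, much as in the proof of Theorem~\ref{thm-HintAstar} itself. Your route is a touch slicker; the paper's route has the mild advantage of producing the $A^*$-strategy in closed form.
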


\begin{proof}
  Assume that the sages have found a winning strategy on the graph $G$ and  
  the strategy of the sage $B$ is $\begin{bmatrix}c_0&c_1&c_2\end{bmatrix}$. 
  Observe that $c_0$, $c_1$, $c_2$ are the three different colors. 
  Indeed, if this set does not contain, say, color~2, then consider 
  hat placements where $B$ gets a hat of color~2. In fact, 
  the strategies of all sages except $A$ work on the graph  $G\setminus \{B\}$.  
  When the sage $A$ sees a hat of color $2$ on the sage $B$, then his strategy 
  on the graph $G\setminus \{B\}$ is also well defined. 
  Choose a disproving hat placement on $G\setminus \{B\}$ 
  for this collective strategy and add a~hat of color~2 on the sage~$B$. 
  We obtain a disproving hat placement on the graph $G$.  

  So, the strategy $f_B$ of the sage $B$ must be designed in such way that 
  each of the three colors can be his guess 
  for a suitable color of his neighbor's hat. Therefore, if he was given 
  a hat of color~$c^*$, there is a uniquely determined color $c^{**}$ 
  such that $f_B(c^{**}) = c^*$. Then for the hat placements
  that assign to $A$ a hat of color $c^{**}$ (and only for them), 
	the sage $B$ guesses right. Hence for hat placements in which 
  the hat color of $A$ is not $c^{**}$, somebody from the graph $G\setminus\{B\}$ 
	must guess right. But this means that the sages win with the hint $A^*$ on the 
	graph $G\setminus\{B\}$ (this hint will be given at the moment when $A$ sees
  the color $c^*$ of $B$'s hat). But this is impossible 
  by theorem~\ref{thm-HintAstar}: if the sages lose on the graph 
  $G\setminus\{B\}$ without hints, 
  then they also lose on this graph with the hint $A^*$. A contradiction. 
\end{proof}

\subsection{Analysis of the game on arbitrary graphs}

Now, we deal with the general variant of the game for an arbitrary graph $G$ 
without hints. First, consider several special cases.

\begin{lemma}
  \label{lem:2cycles}
  Let $G$ be a connected graph that contains two cycles without common vertices.
  Then the sages win.
\end{lemma}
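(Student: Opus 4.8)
The plan is to reduce to the machinery already built for the hint $A-1$. Suppose $G$ is connected and contains two vertex-disjoint cycles $C$ and $C'$. Since $G$ is connected, there is a path in $G$ joining $C$ to $C'$; let $A$ be the endpoint of this path lying on $C$ and $B$ the endpoint lying on $C'$, and let $Q$ be the path $P(AB)$ (we may take it to be a shortest such path, so it meets $C$ only in $A$ and $C'$ only in $B$). If $A=B$ the two cycles would share the vertex $A$, contradicting disjointness, so $Q$ has at least two vertices and contains at least one edge. Write $Q = P_k(AB)$ and let $D$ be the neighbour of $B$ on $Q$ (possibly $D=A$).

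Now I would build a winning strategy by pushing hints inward from both ends. By Lemma~\ref{thr:hint3_cycle} the sages win on the cycle $C$ with the hint $A-1$; using Lemma~\ref{lem:hint3_push} repeatedly (once for each internal edge of $Q$), we can ``push'' this hint along $Q$: the sages win on the subgraph $C \gplus{A} P_k(AB)$ with the hint $B-1$. Symmetrically, by Lemma~\ref{thr:hint3_cycle} the sages win on $C'$ with the hint $B-1$. So now we have two different structures hanging off the vertex $B$ — the ``pushed'' path-plus-cycle on one side and the cycle $C'$ on the other — and on each of them the sages win with the hint $B-1$ (say $B-\{i\}$ and $B-\{j\}$ respectively). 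The idea is that $B$ sees the same colour on its two neighbours in opposite directions only if those colours agree, so $B$ can split its observation: based on what it sees toward $C'$, it decides which "forbidden colour" to pretend it has, and then runs the corresponding pushed strategy on the other side. Concretely this is exactly the merging argument used in Theorem~\ref{thr:hint3_sum} and in Lemma~\ref{lem:path_with_a=b} (case $A=B$), specialized to two summands at the vertex $B$: if $B$ guesses its own colour whenever a certain consistency holds, and otherwise one of the two subgraphs certifies a correct guess, we are done. I would set it up as: for hint-colour $i$ the subgraph $H_i = C\gplus{A}P_k(AB)$ supplies strategy $g_i$ winning with $B-\{i\}$, for hint-colour $j$ the subgraph $H_j = C'$ supplies strategy $g_j$ winning with $B-\{j\}$; choose the names so $\{i,j\}$ covers two of the three colours and let $B$'s guess be the third colour $\ell$ unless forced otherwise — but more cleanly, use that $B$ lies in both subgraphs and let $B$ look at its $C'$-neighbour to pick which pushed strategy to emulate, exactly as $A$ does in Theorem~\ref{thr:hint3_sum}.

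The technical care will be in the bookkeeping of which colour is forbidden on which side and in checking that $B$'s guessing function is well-defined (its neighbours in $Q$ and in $C'$ are distinct vertices, so there is no conflict of inputs). The cleanest route, and the one I would actually write, is probably: push the hint from $C$ all the way to $B$ to get a graph $H$ with $H \gplus{B} C'$ the relevant subgraph of $G$, note the sages win on $H$ with $B-1$ and on $C'$ with $B-1$, and then invoke a two-summand version of the argument in Theorem~\ref{thr:hint3_sum} — have $B$ call out the colour it sees toward $C'$ while the $C'$-sages run their $B-\{c\}$-strategy for the relevant $c$, and if $B$ is wrong then $B$'s true colour equals what it sees toward $C'$ is false, so instead $B$'s true colour differs from that, and... this needs the symmetric setup. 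The main obstacle, then, is purely organizational: arranging the forbidden colours on the two sides so that a single public choice by $B$ makes both halves consistent. Once the two cycles are each converted (via Lemmas~\ref{thr:hint3_cycle} and~\ref{lem:hint3_push}) into $B-1$-winnable subgraphs sharing only the vertex $B$, the win follows by the same merge-at-a-vertex trick already used twice in the paper, and Lemma~\ref{subgraph} lets us forget the rest of $G$.
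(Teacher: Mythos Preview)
Your reduction to the hint $A{-}1$ on each side is exactly right, and the pushing via Lemmas~\ref{thr:hint3_cycle} and~\ref{lem:hint3_push} (equivalently, a direct appeal to Theorem~\ref{thr:hint-a2-win}) is fine. The gap is in the merge step. You end up with two subgraphs $H$ and $C'$ that share the \emph{vertex} $B$, each winnable with the hint $B{-}1$, and you assert that ``the same merge-at-a-vertex trick already used twice in the paper'' finishes the job. But Theorem~\ref{thr:hint3_sum} needs \emph{three} summands \emph{and} an extra leaf, and the $A=B$ case of Lemma~\ref{lem:path_with_a=b} is a different situation entirely; the paper contains no two-summand merge at a vertex. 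The difficulty is real, not organisational: when two hint-strategies meet at a single vertex $B$, each may rely on $B$'s own guess, and $B$ cannot play both strategies at once. (That this case is genuinely hard is why Lemma~\ref{lem:2cyclesWithCommonPoint}, the two-cycles-with-a-common-vertex case, requires a computer-found strategy rather than a clean merge.) Your sketch trails off precisely at this point without producing a well-defined strategy for $B$.

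The paper avoids this by splitting at an \emph{edge} rather than a vertex. Pick two \emph{adjacent} vertices $A,B$ on the connecting path and form disjoint subgraphs $G_A\ni A$ (containing the first cycle) and $G_B\ni B$ (containing the second). By Theorem~\ref{thr:hint-a2-win} the sages win on $G_A$ with hint $A{-}\{2\}$ and on $G_B$ with hint $B{-}\{2\}$. Now $A$ and $B$ can each see the other, so one writes an explicit rule: $A$ says ``$2$'' unless $c_B=2$, in which case $A$ follows the $G_A$-strategy; $B$ says ``$2$'' if $c_A=2$, and otherwise follows the $G_B$-strategy. A four-line case analysis on $(c_A,c_B)$ shows someone always guesses correctly. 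Your own vertex $D$ (the neighbour of $B$ on $Q$) was the right object: stop pushing at $D$ rather than at $B$, and merge across the edge $DB$.
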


\begin{proof}
  Choose two neighboring vertices $A$ and $B$ on a path that connects the cycles
  and consider disjoint connected subgraphs $G_A$ and $G_B$ such that the
  vertex $A$ and the first cycle are contained in $G_A$, the vertex $B$ and the second
  cycle are contained in $G_B$. By theorem~\ref{thr:hint-a2-win}, the sages win on the 
  graph $G_A$ with the hint $A-1$ and also the sages win on the graph $G_B$ with the hint
  $B-1$.

  Wew will describe a winning strategy for the sages on $G$.

  Let the sages on $G_A \setminus \{A\}$ use the strategy for the game with the hint
  $A-\{2\}$, and the sages on $G_B \setminus \{B\}$ use the strategy for the
  game with the hint $B-\{2\}$. Let the strategy of $A$ be as follows: if $c_B =
  2$ then $A$ makes his guess according to the strategy on $G_A$ with the hint
  $A-\{2\}$, otherwise (when $c_B \neq 2$) he says ``2''. Let the strategy of
  $B$ be as follows: if $c_A = 2$ then $B$ says ``$2$'', and if  $c_A \neq 2$
  then $B$ uses the strategy for $G_B$ with the hint $B-\{2\}$.

  In the following table we consider all possible hat placements for $A$ and
  $B$ and demonstrate who wins.
  \begin{center}
    \begin{tabular}{|c|c|c|c|c|c|c|c|c|c|}
      \hline
      $c_A$      &0&0&0&1&1&1&2&2&2\\ \hline
			$c_B$      &0&1&2&0&1&2&0&1&2\\ \hline
			Who wins&$G_B$&$G_B$&$G_A$&$G_B$&$G_B$&$G_A$&$A$&$A$&$B$\\ \hline
    \end{tabular}
  \end{center}
\end{proof}

\begin{lemma}
  \label{lem:2cyclesWithCommonPoint}
  Let $G$ be a graph containing two cycles that have only one common vertex.
  Then the sages win.
\end{lemma}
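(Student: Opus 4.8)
*Let $G$ be a graph containing two cycles that have only one common vertex. Then the sages win.*

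Here is the plan. Call the shared vertex $O$, and let $C$ and $C'$ be the two cycles passing through it; denote by $A$ and $A'$ the two neighbours of $O$ lying on $C$, and by $B$ and $B'$ its two neighbours on $C'$ (all four are distinct because the cycles meet only at $O$). The idea is to split the "figure-eight" based at $O$ into two independent pieces and let $O$ arbitrate between them using the colour it sees on one designated neighbour, exactly in the spirit of Theorem~\ref{thr:hint3_sum} and Lemma~\ref{lem:2cycles}, but now the two cycles share the vertex $O$ rather than being joined by a path. Concretely, I would use the two arcs of $C'$ from $O$ to pass a hint into the $C$-side, and conversely. The technical device is the "pushing a hint" Lemma~\ref{lem:hint3_push}: a hint $A-1$ on a leaf $A$ of a graph can be propagated along a pendant edge. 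Since $O$ is the endpoint of the path $P(OA)\subset C$ and also lies on $C'$, one can treat $O$, from the viewpoint of the $C$-side subgraph $G_C$ (the cycle $C$ together with everything hanging off it, but not $C'$), as a vertex that will be fed a hint $O-1$ by the $C'$-side.

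The main steps, in order: (1) Define $G_C$ to be the union of the cycle $C$ with all connected pieces of $G$ that reach $C$ without using any edge of $C'$, and symmetrically $G_{C'}$; these two subgraphs share only the vertex $O$ and together cover a subgraph of $G$, so by Lemma~\ref{subgraph} it suffices to win on $G_C\cup_{O}G_{C'}$. (2) By Theorem~\ref{thr:hint-a2-win}, the sages have a winning strategy on $G_C$ with the hint $O-1$ (since $G_C$ is connected and contains the cycle $C$), say with missing colour forced to be $0$ or $1$ depending on a choice; likewise on $G_{C'}$ with hint $O-1$. (3) Now give $O$ the arbiter role: $O$ looks at the colour $j=c_{A'}$ of one of its $C'$-neighbours, say $A'\dots$ — no, cleaner to mimic Theorem~\ref{thr:hint3_sum} directly. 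Let $O$ read the colour $j$ seen on one fixed neighbour, and have $O$ play the $G_C$-strategy for hint $O-\{j\}$ together with the other sages of $G_C$; simultaneously the sages of $G_{C'}\setminus\{O\}$ run a strategy designed so that $O$'s true colour being $\ne j$ forces one of them to guess right. (4) The subtle point is that $O$ cannot simultaneously be "the hinted vertex" on both sides and an arbiter reading a neighbour that also lies inside one of these sides. The resolution: since $C$ and $C'$ meet only at $O$, the neighbour $O$ uses as its "information source" can be chosen to lie on $C'$; then on the $G_C$-side $O$ genuinely behaves as a vertex receiving the hint $O-\{j\}$ (where $j$ is external data from the $C'$-side), and Theorem~\ref{thr:hint-a2-win} applies verbatim. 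On the $G_{C'}$-side we need: for every placement with $c_O\ne j$, some sage in $G_{C'}\setminus\{O\}$ guesses right — this is precisely a winning strategy on $G_{C'}$ with hint $O-\{j\}$, which exists by Theorem~\ref{thr:hint-a2-win}, but now with $O$ itself participating only passively. One checks the case split $c_O = j$ versus $c_O\ne j$: if $c_O=j$ then $O$, running the $G_C$-strategy for hint $O-\{j\}$, would be cheated — so instead we must let some sage on the $C'$-side catch the value $j$. The honest way to make this work is to realize the "information source" neighbour $B\in C'$ so that $B$ plays "I say what I see" looking at $O$ (possible since $B$ has $O$ as a neighbour on the cycle): then whenever $c_O=j$ and $B$ guessed $j$, $B$ is right; whenever $c_O\ne j$, we have the genuine hint $O-\{j\}$ holding on $G_C$ and the $G_C$-strategy wins.

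Let me state the clean version of the argument I would write out. Pick the shared vertex $O$ and a neighbour $B$ of $O$ lying on $C'$. Let $H = G\setminus\{\text{the edge }OB\text{ and the rest of }C'\text{ beyond it}\}$ — more carefully, let $G_C$ be the component of $O$ in $G$ after deleting one edge of $C'$ incident to $O$, so that $G_C$ still contains the full cycle $C$, and let $B$ be the neighbour of $O$ on the deleted edge, still present in $G$. Then: (a) $B$ plays "I say what I see" on $O$, i.e.\ strategy $[0,1,2]$; (b) by Theorem~\ref{thr:hint-a2-win}, there is a winning strategy $f$ on $G_C$ with the hint $O-\{2\}$; (c) the sages of $G_C\setminus\{O\}$ play according to $f$, and $O$ plays as follows: if $c_B=2$, then $O$ plays the $f$-move for $O$ (which is well-defined given its $G_C$-neighbours' colours); if $c_B\ne 2$, $O$ says "$2$". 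The verification is the same five-line case analysis as in Lemma~\ref{lem:hint3_push} and Lemma~\ref{lem:2cycles}: if $c_O=c_B$ then $B$ is right; if $c_O\ne c_B$ and $c_B=2$, then $c_O\ne 2$ so the hint $O-\{2\}$ is satisfied and the $G_C$-strategy $f$ produces a correct guess inside $G_C\setminus\{O\}$ unless $O$ itself is the one guessing — but $O$ is running $f$'s move, so $O$ or someone in $G_C$ is right; if $c_O\ne c_B$ and $c_B\ne 2$, then $O$ said "$2$", so $O$ is right exactly when $c_O=2$, and when $c_O\ne 2$ we again have the hint $O-\{2\}$ satisfied so $f$ wins on $G_C$. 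Thus every placement is caught.

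The expected main obstacle is exactly the role-conflict of $O$: making sure that when $O$ "runs $f$'s move for the hint $O-\{2\}$" this is legitimate, i.e.\ that $O$'s $G_C$-neighbourhood in the reduced graph equals its $G_C$-neighbourhood in $G$ — this is where "the two cycles share only $O$" is used, guaranteeing that deleting a $C'$-edge at $O$ and moving $B$ to the outside does not remove any $G_C$-neighbour of $O$. Once that is pinned down, the proof is a direct reuse of Lemma~\ref{lem:hint3_push}'s trick (pushing the hint from the leaf-like direction $B$ into $O$) combined with Theorem~\ref{thr:hint-a2-win}, so I would keep the write-up short and refer back to those two results for the mechanical parts.
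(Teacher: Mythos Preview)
Your argument has a genuine gap in the final case of the verification. Take the situation $c_B\in\{0,1\}$, $c_O\in\{0,1\}$, $c_O\ne c_B$. Then $B$ says $c_O\ne c_B$ and is wrong; $O$ says ``$2$'' and is wrong. You conclude ``the hint $O-\{2\}$ is satisfied so $f$ wins on $G_C$'', but for $f$ to win you need \emph{every} sage of $G_C$, including $O$, to play according to $f$. In this branch $O$ is \emph{not} running $f_O$ --- $O$ is saying ``$2$''. The winning guarantee of $f$ may well rest on $O$'s own guess; nothing forces the correct guesser under $f$ to lie in $G_C\setminus\{O\}$. (There is a second, related inconsistency: with your definition $G_C$ is $G$ minus the edge $OB$, so $B\in G_C$ via the other arc of $C'$; you then ask $B$ both to play $f_B$ as a member of $G_C\setminus\{O\}$ and to play ``I say what I see on $O$''. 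These cannot both hold.)

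Compare with what makes Lemma~\ref{lem:2cycles} work: there one has \emph{two} disjoint hinted subgraphs $G_A$, $G_B$ with distinct hinted vertices $A$, $B$, and whenever $A$ abandons its $f_A$-move the $G_B$-side is fully running $f_B$ (because $B$'s rule switches to $(f_B)_B$ precisely when $c_A\ne2$). In your setting the single vertex $O$ is asked to be simultaneously the hinted vertex and the arbiter; once $O$ deviates from $f_O$ there is no second independent hinted subgraph standing by to cover the case $c_O\ne 2$. Using the other cycle $C'$ for this purpose does not help directly, since its hinted vertex would again be $O$ and $O$ still makes only one guess.

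The paper proceeds differently: it exhibits an explicit almost-stationary strategy on the figure-eight $C_{k+1}\gplus{A}C_{m+1}$ (one $4$-dimensional matrix for the central sage, two auxiliary $3\times 3$ matrices for the others), and verifies by a $9\times 9$ matrix computation in the 9-vertex model that the number of disproving placements is zero. No reduction to the hint lemmas is used here.
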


\begin{proof}
  We have anly a technical proof. The strategies were found by a computer search, the
  proof that they are winning was performed in the computer algebra system
  Maple. It suffices to consider the case $G=C_{k+1} \gplus{A} C_{m+1}$.

  We present the following almost stationary strategy. Let the sage $A$ use the strategy $\mathbf A$,
  the sages $B_{k+1}$ and~$D_1$ use the strategy $\mathbf T$, 
  and all the other sages use  the strategy $\mathbf S$. 
  (see fig.~\ref{fig:2cyclesl}, we mark the sages with the letters denoting
  their strategies).

\begin{figure}[b]
 \minipage{0.45\textwidth}
  \footnotesize
  \begin{tikzpicture}[
    thick,
    scale=0.8,
      every node/.style={circle, draw,inner sep=2pt,fill},
      every edge/.style={draw}
      ]      
      \node (v0) at (0, 0) [label=right:$A$]{};
      \node (v1) at (-1, 2) [label=left:T,label=above:$D_1$]{};
      \node (v2) at (-3, 1) [label=left:S,label=above:$D_2$]{};
      \node (v3) at (-3, -1) [label=left:S]{};
      \node (v4) at (-1, -2) [label=left:S,label=below:$D_{m+1}$]{};
      \node (v5) at (1, 1.5) [label=right:T,label=above:$B_{k+1}$]{};
      \node (v6) at (3, 2.5) [label=right:S,label=above:$B_{k}$]{};
      \node (v7) at (5, 1) [label=right:S]{};
      \node (v8) at (5, -1) [label=right:S]{};
      \node (v9) at (3, -2.5) [label=right:S,label=below:$B_2$]{};
      \node (v10) at (1, -1.5) [label=right:S,label=below:$B_1$]{};
      \path (v0) edge (v1);
      \path (v1) edge (v2);
      \path (v2) edge (v3);
      \path (v3) edge (v4);
      \path (v4) edge (v0);
      \path (v0) edge (v5);
      \path (v5) edge (v6);
      \path (v6) edge (v7);
      \path (v7) edge (v8);
      \path (v8) edge (v9);
      \path (v9) edge (v10);
      \path (v10) edge (v0);
  \end{tikzpicture}
  \endminipage
  \minipage{0.55\textwidth}
  \footnotesize
  \[
    \mathbf A=\begin{bmatrix}
      \begin{bmatrix}
        2&0&2\\
        2&0&2\\
        0&0&2        
      \end{bmatrix}
      &
      \begin{bmatrix}
        2&0&2\\
        2&0&2\\
        0&0&2        
      \end{bmatrix}
      &
      \begin{bmatrix}
        2&0&2\\
        2&0&2\\
        0&0&2        
      \end{bmatrix}\\\\
      \begin{bmatrix}
        1&1&1\\
        1&1&1\\
        1&1&1        
      \end{bmatrix}
      &
      \begin{bmatrix}
        2&0&2\\
        2&0&2\\
        0&0&2        
      \end{bmatrix}
      &
      \begin{bmatrix}
        2&0&2\\
        2&0&2\\
        0&0&2        
      \end{bmatrix}\\\\
      \begin{bmatrix}
        1&1&1\\
        1&1&1\\
        1&1&1        
      \end{bmatrix}
      &
      \begin{bmatrix}
        1&1&1\\
        1&1&1\\
        1&1&1        
      \end{bmatrix}
      &
      \begin{bmatrix}
        2&0&2\\
        2&0&2\\
        0&0&2        
      \end{bmatrix}    
    \end{bmatrix},\]
  \[
    \mathbf T=
    \begin{bmatrix}
      0&2&1\\
      0&1&0\\
      2&2&1
    \end{bmatrix}, \qquad
    \mathbf S=
    \begin{bmatrix}
     2&2&1\\ 
     0&0&1\\ 
     2&2&1 
    \end{bmatrix}\]
  \endminipage
  \caption{The strategies for  $C_{k+1} \gplus{A} C_{m+1}$  ($k=5$, $m=3$)}
  \label{fig:2cyclesl}
\end{figure}

The following matrices describe the strategies $\mathbf T$ and $\mathbf S$ in the
9-vertex model
\[
    {\mathcal T}=\begin{pmatrix}
      0&1&1&0&0&0&0&0&0\\
      0&0&0&1&0&0&0&0&0\\
      0&0&0&0&0&0&1&1&1\\
      0&1&0&0&0&0&0&0&0\\
      0&0&0&1&0&1&0&0&0\\
      0&0&0&0&0&0&1&1&1\\
      1&1&1&0&0&0&0&0&0\\
      0&0&0&0&0&0&0&0&0\\
      0&0&0&0&0&0&1&1&1
    \end{pmatrix},\qquad
    {\mathcal S}=
    \begin{pmatrix}
      1&1&1&0&0&0&0&0&0\\
      0&0&0&1&0&0&0&0&0\\
      0&0&0&0&0&0&0&1&1\\
      0&0&0&0&0&0&0&0&0\\
      0&0&0&1&1&1&0&0&0\\
      0&0&0&0&0&0&1&1&1\\
      1&1&1&0&0&0&0&0&0\\
      0&0&0&1&0&0&0&0&0\\
      0&0&0&0&0&0&0&1&1
    \end{pmatrix}.
\]
By a routine induction or using a computer, one can calculate
\begin{gather}
  \label{eqn:matrixSkT}
  {\mathcal{S}^k\mathcal{T}}=
  \begin{pmatrix}
    0&k&1&k&1&k-1&k-1&0&\frac{k^2-k+2}2\\
    0&1&0&1&0&  1&  1&0&  k-1\\
    0&0&0&{\mathbf0}&0&  0&  0&1&    1\\
    {\mathbf0}&\boxed{0}&0&0&\boxed{0}&  0&  \boxed{0}&0&    1\\
    0&1&0&1&\boxed{0}&  1&  1&\boxed{0}&  k-1\\
    0&k&1&k&1&k-1&k-1&\boxed{0}&\frac{k^2-k+2}2\\
    {\mathbf0}&k&1&k&1&k-1&k-1&0&\frac{k^2-k+2}2\\
    0&1&{\mathbf0}&1&0&  1&  1&0&  k-1\\
    0&0&{\mathbf0}&0&{\mathbf0}&  0&  0&0&    1
  \end{pmatrix}, \allowdisplaybreaks
  \\
  {\mathcal{T}\mathcal{S}^m}=
  \begin{pmatrix}
    0&0&  1&  1&  1&0&0&0&   m-1\\
    0&0&  1&  1&  1&0&0&0&   m-1\\
    1&1&m-1&m-1&m-1&0&0&0&\frac{m^2-3m+6}2 \\
    0&0&  1&  1&  1&0&0&0&   m-2\\
    1&1&m-1&m-1&m-1&0&0&0&\frac{m^2-3m+6}2\\
    1&1&  m&  m&  m&0&0&0&\frac{m^2-m+2}2\\
    1&1&  m&  m&  m&0&0&0&\frac{m^2-m+2}2\\
    0&0&  1&  1&  1&0&0&0&   m-1\\
    0&0&  0&  0&  0&0&0&0&   1
  \end{pmatrix}.\notag
\end{gather}
To find the number of disproving hat placements, it remains
to calculate the contraction of the tensor~$\mathcal{A}$ corresponding to the strategy
$\mathbf{A}$ with the matrices $\mathcal{S}^k\mathcal{T}$ and
${\mathcal{T}\mathcal{S}^m}$. It is equal to $0$.

The last claim can be checked directly without computer. For this, observe
that if on the left cycle $AD_1D_2\dots D_m$ the sage $D_1$ uses the strategy $\mathbf
T$, the sages $D_2$, \dots, $D_m$ use the same strategy $\mathbf S$, and the sage $A$
use the strategy
$$ 
{\mathbf A}_{(00)(00)}=
\begin{bmatrix}
  2&0&2\\ 
  2&0&2\\ 
  0&0&2 
\end{bmatrix},
$$
then, in fact, the sages on this cycle act according to a winning strategy for the 
hint ${A-\{1\}}$. Indeed, the only two nonvanishing diagonal elements of the
product ${\mathcal A'}\mathcal{T}\mathcal{S}^m$ correspond to disproving hat
placements that assign color 1 to the hat of $A$ (here ${\mathcal A'}$ is the
matrix for the strategy $({\mathbf A}_{(00)(00)})^\text{t}$ in the 9-vertex model, the
transpose is needed to correctly take into account the repeated index). Thus,
depending on the hat colors of the sages $B_1$ and $B_{k+1}$, the sage $A$ either always
says ``1'' or acts on the second cycle according to the strategy $A-\{1\}$. It
remains to check that if no sage on the second cycle (including~$A$) guesses right,
then some sage on the first cycle does. These cases are listed in the
following table:
\begin{center}
  \begin{tabular}{|c|c|c|c|c|c|c||c|c|c|c|c|c|}
    \hline
    $c_A$         &0&0&0&2&2&2& 1&1&1&1&1&1\\ \hline
    $c_{B_1}$   &1&2&2&1&2&2& 0&1&2&0&0&1\\ \hline
    $c_{B_k}$   &0&0&1&0&0&1& 0&1&2&1&2&2\\ \hline
  \end{tabular}
\end{center}
The number of disproving chains for the path $B_1\dots B_{k+1}$ in each of these
cases is ``encoded'' by a~suitable matrix element of
${\mathcal{S}^k\mathcal{T}}$, see.~\eqref{eqn:matrixSkT}, and for each case this
matrix element vanishes. For example, the first column of the table corresponds to zero
element $({\mathcal{S}^k\mathcal{T}})_{(01)(00)}$ (we use the indexing rule shown
in Fig.~\ref{fig:matricaMb}). For a convenience, we bold the zeroes 
that correspond to the left half of the table, and frames the
zeroes for the right half. So, in all the cases under consideration 
disproving chains do not exist. Hence the sages win.
\end{proof}

\begin{lemma}
  \label{lem:3Paths}
  Let $G$ be a connected graph that contain vertices $A$ and $B$
  joined by 3 nonintersecting paths. Then the sages win. 
\end{lemma}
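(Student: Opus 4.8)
The plan is to reduce the case of three internally disjoint paths between $A$ and $B$ to situations we have already handled. The graph $G$ contains a theta subgraph $\Theta$ formed by the three paths $P(AB)$, $P'(AB)$, $P''(AB)$; by Lemma~\ref{subgraph} it suffices to win on $\Theta$ itself. The key observation is that a theta graph contains three distinct cycles: $C' = P \cup P'$, $C'' = P \cup P''$, $C''' = P' \cup P''$. If one of these three cycles has length divisible by $3$, or length $4$, then by Theorem~\ref{thm:Szczechla-cycles} the sages already win on that cycle, hence on $G$. So I may assume all three cycles have length $\not\equiv 0 \pmod 3$ and length $\neq 4$. Writing $a,b,c$ for the numbers of edges on the three paths, the cycle lengths are $a+b$, $a+c$, $b+c$; their sum is $2(a+b+c) \equiv 0 \pmod 3$ only if we are careful — actually modulo $3$ the three residues $a+b, a+c, b+c$ cannot all be nonzero unless $a \equiv b \equiv c \pmod 3$ is false in a specific way, so a short case analysis on $(a \bmod 3, b \bmod 3, c \bmod 3)$ shows that the "bad" configurations are quite restricted.

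The heart of the argument is then the hint machinery. Pick the vertex $A$ and note that the union of two of the paths, say $P \cup P'$, is a cycle $C'$ through $A$; the third path $P''$ attaches $A$ to $B$, which lies on $C'$. So $\Theta$ can be viewed as a cycle $C'$ together with a path from $A$ back to the cycle — but that only gives us one cycle. The better decomposition: cut $\Theta$ at an interior vertex. Choose an edge $A'B'$ on one of the paths (say the longest, so that an interior edge exists) and split $\Theta$ into two connected subgraphs $G_{A'}$ and $G_{B'}$, each still containing a cycle — indeed, removing an interior edge of $P''$ leaves the cycle $C'$ intact on one side and a tree-path on the other, so this particular cut does NOT split off two cycles. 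Instead I should cut along the shared path $P$ is impossible since $P$ need not have interior vertices. The correct move: observe that $\Theta \setminus \{$interior edge of $P'\}$ still contains the cycle $C'' = P \cup P''$, so $\Theta$ is a graph containing a cycle plus extra structure; apply Theorem~\ref{thr:hint-a2-win} to get a winning strategy with hint $A-1$ on the subgraph consisting of cycle $C''$ together with the path $P'$ from $A$ to $B$. Then push the hint: $B$ sits on this subgraph, and the remaining part of $P'$ leading back through $A$...

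Cleaner approach, which is what I would actually write: use Theorem~\ref{thr:hint3_sum}. Since $\Theta$ has two internally disjoint cycles through the pair $\{A,B\}$ sharing only the path-endpoints, choose neighbours $A$ and $B$ and observe the graph $\Theta$ decomposes (after possibly subdividing, harmlessly for the game) as a sum with respect to a single vertex of several pieces each containing a cycle — then Theorem~\ref{thr:hint-a2-win} gives a winning strategy with hint $A-1$ on each piece, and Theorem~\ref{thr:hint3_sum} glues them into an unconditional winning strategy. The main obstacle is getting the decomposition to literally match the $G_0 \gplus{A} G_1 \gplus{A} G_2 \gplus{A} P_2(AB)$ shape of Theorem~\ref{thr:hint3_sum}: a theta graph naturally gives only two cycle-pieces meeting at $A$, not three, so I expect to need either the Szczechla reduction above to dispose of the cases where a cycle length is good, combined with a direct hint-pushing argument (Lemma~\ref{lem:hint3_push}) for the rest — in effect reproving a two-cycle-at-a-point statement like Lemma~\ref{lem:2cyclesWithCommonPoint} but where the two "cycles" share an arc rather than a vertex. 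I anticipate that this arc-sharing case is exactly the technical content and will either follow from a computer-verified stationary strategy in the 9-vertex model (as in Lemma~\ref{lem:2cyclesWithCommonPoint}) or from a careful hint-pushing argument along the shared path $P$, pushing a hint from the interior of $P$ outward to both $A$ and $B$ simultaneously and then invoking Theorem~\ref{thr:hint3_sum} on the resulting configuration of two independent cycles plus a pendant edge.
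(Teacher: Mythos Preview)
Your proposal is not a proof but an exploratory plan, and you correctly identify where it stalls: the theta graph $\Theta$ is the case of two cycles sharing an \emph{arc}, and none of the hint-based tools in the paper handle this directly. Theorem~\ref{thr:hint3_sum} needs three cycle-carrying pieces glued at a single vertex plus a pendant edge; $\Theta$ has neither three such pieces nor a pendant. Lemma~\ref{lem:hint3_push} only pushes a hint \emph{from} a leaf \emph{into} the graph, so your idea of ``pushing a hint from the interior of $P$ outward to both $A$ and $B$'' has no mechanism behind it. The Szczechla reduction (if some $a{+}b$, $a{+}c$, $b{+}c$ is $\equiv 0 \pmod 3$ or equals $4$) disposes of many cases but not all: for instance $a\equiv b\equiv c\equiv 1\pmod 3$ with all path lengths $\geq 4$ gives three cycle lengths $\equiv 2\pmod 3$ and $\geq 8$, and you are stuck.

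The paper's proof is exactly the first of the two options you name at the end: it abandons the hint machinery entirely and presents explicit, computer-found, almost-stationary strategies (two separate tables, one for the case where all three paths have interior vertices, one for the case where one path is the single edge $AB$), verified in the 9-vertex model just as in Lemma~\ref{lem:2cyclesWithCommonPoint}. The universality over arbitrary path lengths comes from the stabilization of the zero/nonzero pattern in powers of the $9\times 9$ transfer matrix $\mathcal{S}^k$, with a couple of small-length exceptions handled by Theorem~\ref{thm:Szczechla-cycles}. So your instinct that this lemma is ``the technical content'' requiring a computer-verified strategy is correct; the missing piece is that you have not supplied one, and no purely structural argument using the existing hint lemmas is known to close the gap.
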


\begin{proof}
We have only a technical proof. 
Consider two cases: 

1) each of the 3 paths contains at least one (inner) vertex; a strategy
for this case is presented in Fig.~\ref{pic:3path333}, 

2) one of the paths is degenerate and consists of the edge $AB$;
a~strategy for this case is presented in Fig.~\ref{pic:3path-DirectEdge}.

The three paths are shown in the left parts of both figures, each vertex is
marked by the letter denoting the strategy, the strategies themselves are presented in the right parts of
the figures. We assume that the neighboring vertices for $A$ and $B$ are ordered
from top to bottom. Thus, the  matrix $\mathbf{A}[0]$ describes the actions of the sage $A$ 
if his topmost neighbor has a hat of color~0. We have checked that the strategies
are winning by calculations in the 9-vertex model analogous to the
calculations of lemma~\ref{lem:2cyclesWithCommonPoint}.

\begin{figure}[b]                                       
  \centering
  \minipage{0.4\textwidth}
  \footnotesize
  \begin{tikzpicture}[
    scale=1.2,
      every node/.style={circle, draw,inner sep=2pt,fill},
      every edge/.style={draw}
      ]
      \node (A) at (0, 0) [label=left:A]{};
      \node (v0) at (1, 1) [label=above:S]{};
      \node (v1) at (2, 1) [label=above:S]{};
      \node (v2) at (3, 1) [label=above:S]{};
      \node (v4) at (1, 0) [label=above:S]{};
      \node (v5) at (2, 0) [label=above:S]{};
      \node (v6) at (3, 0) [label=above:S]{};
      \node (v8) at (1, -1) [label=above:W]{};
      \node (v9) at (2, -1) [label=above:S']{};
      \node (v10) at (3, -1) [label=above:S']{};
      \node (B) at (4, 0) [label=right:B]{};
      \path (A) edge (v0);
      \path (v0) edge (v1);
      \path (v1) edge (v2);
      \path (v2) edge (B);
      \path (A) edge (v4);
      \path (v4) edge (v5);
      \path (v5) edge (v6);
      \path (v6) edge (B);
      \path (A) edge (v8);
      \path (v8) edge (v9);
      \path (v9) edge (v10);
      \path (v10) edge (B);
  \end{tikzpicture}
  \endminipage
  \minipage{0.6\textwidth}
  \footnotesize
 \begin{align*}
    \mathbf{A}[0]&=\begin{bmatrix} 0&0&0\\1&1&1\\1&1&1 \end{bmatrix},&
    \mathbf{A}[1]&=\begin{bmatrix} 0&0&0\\2&1&2\\0&0&2 \end{bmatrix},&
    \mathbf{A}[2]&=\begin{bmatrix} 0&0&0\\2&2&2\\0&0&2 \end{bmatrix},
  \\
    \mathbf{B}[0]&=\begin{bmatrix} 1&2&2\\0&2&0\\0&0&0 \end{bmatrix},&
    \mathbf{B}[1]&=\begin{bmatrix} 1&1&1\\1&2&2\\1&2&2 \end{bmatrix},&
    \mathbf{B}[2]&=\begin{bmatrix} 1&1&1\\0&2&0\\0&2&0 \end{bmatrix},
  \\
    \mathbf W&=   \begin{bmatrix} 2&2&2\\1&2&0\\1&1&0 \end{bmatrix},&
    \mathbf S&=   \begin{bmatrix} 1&1&1\\0&2&0\\0&2&0 \end{bmatrix},&
    \mathbf {S'}&=\begin{bmatrix} 1&1&0\\1&2&2\\1&2&0 \end{bmatrix}.
   \end{align*}
  \endminipage
  
  \caption{The upper and middle paths have at least one inner vertex,
  the lower path has at least two vertices. The strategy $S$ 
  is written for the ordering of vertices from left to right for the upper path, 
  and from right to left for the middle path. }
  \label{pic:3path333}
\end{figure}

\begin{figure}[t]                                       
  \centering
  \minipage{0.4\textwidth}
  \footnotesize
  \begin{tikzpicture}[
    scale=1.2,
      every node/.style={circle, draw,inner sep=2pt,fill},
      every edge/.style={draw}
      ]
      \node (A) at (0, 1) [label=left:A]{};
      \node (v4) at (1, 0) [label=above:S]{};
      \node (v5) at (2, 0) [label=above:S]{};
      \node (v6) at (3, 0) [label=above:S]{};
      \node (v8) at (1, -1) [label=above:S]{};
      \node (v9) at (2, -1) [label=above:S]{};
      \node (v10) at (3, -1) [label=above:S]{};
      \node (B) at (4, 1) [label=right:B]{};
      \path (A) edge (B);
      \path (A) edge (v4);
      \path (v4) edge (v5);
      \path (v5) edge (v6);
      \path (v6) edge (B);
      \path (A) edge (v8);
      \path (v8) edge (v9);
      \path (v9) edge (v10);
      \path (v10) edge (B);
  \end{tikzpicture}
  \endminipage
  \minipage{0.6\textwidth}
  \footnotesize
 \begin{align*}
    \mathbf{A}[0]&=\begin{bmatrix} 1&1&0\\2&0&2\\1&1&0 \end{bmatrix},&
    \mathbf{A}[1]&=\begin{bmatrix} 0&0&0\\2&2&2\\0&0&0 \end{bmatrix},&
    \mathbf{A}[2]&=\begin{bmatrix} 1&1&0\\2&2&2\\1&1&2 \end{bmatrix},
  \\
    \mathbf{B}[0]&=\begin{bmatrix} 0&2&1\\1&2&1\\0&2&2 \end{bmatrix},&
    \mathbf{B}[1]&=\begin{bmatrix} 1&2&1\\1&2&1\\1&2&1 \end{bmatrix},&
    \mathbf{B}[2]&=\begin{bmatrix} 1&2&1\\1&0&1\\0&2&0 \end{bmatrix},
  \\
    &&\mathbf S&=\begin{bmatrix} 1&2&1\\1&1&1\\0&2&0 \end{bmatrix}.
    \end{align*}
  \endminipage
  
  \caption{The strategy for the case where one of the paths 
  is the edge $AB$ itself, and other paths are arbitrary.
  The strategy $S$ is written for the ordering of vertices from left to right for the middle path, 
  and from right to left for the lower path.}
  \label{pic:3path-DirectEdge}
\end{figure}

Remarkably, these strategies are universal: they work on paths $AB$ of
arbitrary length with only two exceptions. For the strategy in
Fig.~~\ref{pic:3path-DirectEdge}, we must assume that one of the paths $P(AB)$
(we may think that it is the lower one) contains at least two vertices. Besides,
the strategy does not work if two of the paths (the middle and the lower one) both
contain exactly one inner vertex. In these exceptional cases the sages win by
theorem~\ref{thm:Szczechla-cycles}.

This universality is explained by the fact that, as in formula~\eqref{eqn:MatrixASk} 
of lemma~\ref{lem:primer_na-cycle}, the distribution of zero and nonzero elements 
in the powers ${\mathcal S}^k$ stabilizes for large $k$. This rule can be violated only for small $k$.
\end{proof}

Now we will prove the main theorem.

\begin{theorem}
  \label{main_thm}
  The sages lose on a connected graph $G$ if and only if either $G$ is a tree or
  it contains a unique cycle $C_n$ where $n$ is not divisible by $3$, $n\geq 5$.
\end{theorem}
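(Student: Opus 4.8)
The plan is to establish the two directions of the equivalence separately, reducing everything to the special cases already treated above.

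For the ``if'' direction (the listed graphs are losing): a tree is losing by Lemma~\ref{tree}. Suppose $G$ is connected with a unique cycle $C_n$ where $3\nmid n$ and $n\ge 5$; then $n\neq 4$, so by Theorem~\ref{thm:Szczechla-cycles} the sages already lose on $C_n$ itself. Since $G$ is $C_n$ with a forest attached along its vertices, I would list the non-cycle vertices $v_1,\dots,v_m$ in order of nondecreasing distance to $C_n$, so that each $v_i$ is a leaf of $G_i := C_n\cup\{v_1,\dots,v_i\}$, and then argue by induction: $G_0=C_n$ is losing, and each $G_i$ is losing by Corollary~\ref{sledst:add-leaf}. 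Hence $G=G_m$ is losing.

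For the ``only if'' direction I would prove the contrapositive: if $G$ is connected, is not a tree, and does not contain a unique cycle $C_n$ with $3\nmid n$, $n\ge 5$, then the sages win. If $G$ has a unique cycle $C_n$, then necessarily $3\mid n$ or $n=4$ (that being the complement of the excluded case, since $n\ge 3$ always), so the sages win on $C_n$ by Theorem~\ref{thm:Szczechla-cycles} and hence on $G$ by Lemma~\ref{subgraph}. Otherwise $G$ contains at least two independent cycles, so its cyclomatic number is $\ge 2$. Passing to the block decomposition of $G$ (the cyclomatic number is the sum of those of the blocks), either some block has cyclomatic number $\ge 2$, or at least two blocks are themselves (induced) cycles. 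In the first case that block is $2$-connected and not a cycle, so an ear decomposition exhibits inside it a subdivision of the theta graph, i.e.\ two vertices joined by three internally disjoint paths, and the sages win by Lemma~\ref{lem:3Paths}. In the second case, since two distinct blocks meet in at most one (cut) vertex, the two cycle-blocks are either vertex-disjoint — and the sages win by Lemma~\ref{lem:2cycles} — or share exactly one vertex — and the sages win by Lemma~\ref{lem:2cyclesWithCommonPoint}. This exhausts all cases.

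All the substance here is already in the winning strategies of Lemmas~\ref{lem:2cycles}, \ref{lem:2cyclesWithCommonPoint}, \ref{lem:3Paths} (partly verified by computer) and in Szczechla's Theorem~\ref{thm:Szczechla-cycles}; what remains is the elementary but slightly fiddly graph-theoretic bookkeeping, namely checking that a connected non-tree graph that is not unicyclic always contains one of the three configurations above. I expect that block/ear-decomposition step to be the main (though routine) obstacle — in particular one must be careful that two distinct cycles sharing two or more vertices genuinely force a theta-subgraph, which is exactly why it is convenient to argue through blocks rather than through a pair of cycles directly.
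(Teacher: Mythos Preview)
Your proof is correct and follows essentially the same route as the paper: both directions reduce to Theorem~\ref{tree}, Theorem~\ref{thm:Szczechla-cycles}, Corollary~\ref{sledst:add-leaf}, and Lemmas~\ref{lem:2cycles}, \ref{lem:2cyclesWithCommonPoint}, \ref{lem:3Paths}, and the only real work is the graph-theoretic case split showing that a connected graph with at least two cycles contains one of the three configurations.

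The one genuine difference is in how that case split is organized. The paper argues directly with a pair of cycles: if they are disjoint or share one vertex, apply Lemmas~\ref{lem:2cycles} or~\ref{lem:2cyclesWithCommonPoint}; if they share at least two vertices, pick two cycles of minimum total length and extract three internally disjoint $A$--$B$ paths from a shortest common arc. You instead pass through the block decomposition: either some $2$-connected block has cyclomatic number $\ge 2$, whence an open ear decomposition immediately yields a theta subgraph, or two distinct blocks are cycles and therefore meet in at most one cut vertex. Your route is arguably cleaner---it invokes standard structural tools (blocks, Whitney's ear decomposition) and sidesteps the slightly delicate minimality argument the paper uses to guarantee the three paths are internally disjoint---while the paper's version is more self-contained and avoids naming those tools. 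Either way the content is the same.
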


\begin{proof}
If $G$ is a tree, then  the sages lose by theorem~\ref{tree}. If $G$ is a
cycle $C_n$ where $n$ is not divisible by 3, $n\geq 5$, then the sages lose by
theorem~\ref{thm:Szczechla-cycles}. Each graph containing a unique cycle can be
obtained from the cycle by successively applying the operation ``appending a
leaf''. By corollary~\ref{sledst:add-leaf} this operation does not affect the
property ``the sages lose''. Thus, the sages lose for all graphs mentioned in the statement
of the theorem.

Now we will prove that the sages win for all the other graphs. If $G$ contains a
unique cycle $C_n$ where $n=4$ or $n$ is divisible by $3$, then the sages win
by theorem~\ref{thm:Szczechla-cycles}.

Let $G$ contain at least two cycles. If it contains two nonintersecting
cycles, then the sages win by lemma~\ref{lem:2cycles}. If it contains two cycles
with one common vertex, then the sages win by
lemma~\ref{lem:2cyclesWithCommonPoint}. In all the other graphs containing two
cycles, one can choose two vertices $A$ и $B$ joined with 3 pairwise disjoint 
paths (and hence the sages win by lemma~\ref{lem:3Paths}).
Indeed, let the graph contain two cycles with (at least) two common vertices.
Consider two cycles with minimum sum of lengths. The two common vertices split
the cycles onto ``arcs''. Choose the shortest arc $AB$. Then this arc $AB$ and
the two paths from $A$ to $B$ along the complementary arc and along the second
cycle form a set of three nonintersecting paths.
\end{proof}

\footnotesize

\end{document}